\newtheorem{definition}{Definition}[section]
\newtheorem{theorem}[definition]{Theorem}
\newtheorem{proposition}[definition]{Proposition}
\newtheorem{example}[definition]{Example}
\newtheorem{remark}[definition]{Remark}
\newproof{proof}{\textbf{Proof}}
\journal{}
\begin{document}

\begin{frontmatter}



\title{\textbf{Stabilizers in MTL-algebras}}


\author{Jun Tao Wang$^{\ast,a}$, Peng Fei He$^{b}$, Arsham Borumand Saeid$^{c}$}
\cortext[cor1]{Corresponding author. \\
Email addresses:  wjt@stumail.nwu.edu.cn(J.T. Wang), hepengf1986@126.com (P.F. He),\\ arsham@uk.ac.ir (A. Borumand. Saeid).}
\address[A]{School of Mathematics, Northwest University, Xi'an, 710127, P.R. China}
\address[B]{School of Mathematics and Information Science, Shaanxi Normal University, Xi'an, 710119, P.R. China}
\address[C]{Department of Pure Mathematics, Faculty of Mathematics and Computer, Shahid Bahonar University of Kerman, Kerman, Iran}

\begin{abstract}

 In the paper, we introduce some stabilizers and investigate related properties of them in MTL-algebras.
 Then, we also characterize some special classes of MTL-algebras, for example, IMTL-algebras, integral MTL-algebras, G\"{o}del algebras and MV-algebras, in terms of these stabilizers.
 Moreover, we discuss the relation between stabilizers and several special filters (ideals) in MTL-algebras.
 Finally, we discuss the relation between these stabilizers and prove that the right implicative stabilizer and right multiplicative stabilizer are order isomorphic. This results also give answers to some open problems,
 which were proposed by Motamed and Torkzadeh in [Soft Comput, {\bf 21} (2017) 686-693].
\end{abstract}

\begin{keyword} logical algebra; MTL-algebra; implicative stabilizer; multiplicative stabilizer



\end{keyword}

\end{frontmatter}

\section{Introduction}
\label{intro}Much of human reasoning and decision making is based on an environment of imprecision, uncertainty, incompleteness of information, partiality of truth and partiality of possibility-in short, on an environment of imperfect information. Hence how to represent and simulate human reasoning become a crucial problem in information science field. For this reason, various kinds of fuzzy logical algebras as the semantical systems of fuzzy logic systems have been extensively introduced and studied, for example, MV-algebras \cite{Chang}, BL-algebras \cite{Hajek}, MTL-algebras \cite{Esteva}, NM-algebras \cite{Esteva}. Among these logical algebras, MTL-algebras are the most significant because the others are all particular cases of MTL-algebras. As a more general residuated structure based continuous t-norm and its residua, an MTL-algebra is a BL-algebra without the divisibility. In fact, MTL-algebras contain all algebras induced by left-continuous t-norm and its residua. Therefore, MTL-algebras play an important role in studying fuzzy logics and their related structures.


The notion of stabilizers, introduced from fixed point set theory, is helpful for studying structures and properties in algebraic systems. In fact, stabilizer as a part of a monoid acting on a nonempty set and the stabilizer of nonempty subset $X$ is the transporter of $X$ to itself. From a logic point of view, stabilizer can be used in studying the consequence operators in the correspondence logic system. Since stabilizer was successful in several distinct tasks in various branches of mathematics \cite{Roudabri}, it has been extended to various logical algebras, for example, Haveshki was first introduced the stabilizers in BL-algebras and investigate some basic properties of them. Also, they discuss the relations between stabilizers and filters in BL-algebras in \cite{Haveshki1}. Inspired by this, Borzooei introduced some new types of stabilizers and determined the relations among stabilizers in BL-algebras, they also show that the (semi) normal filters and fantastic filters are equal in BL-algebras via stabilizers in \cite{Borzooei1}. After then, Saeid introduced two kinds of stabilizers and discussed the relation between stabilizers and some other ideals in MV-algebras, they also prove that the lattices of ideals of MV-algebras forms a pseudocomplement lattice via stabilizers in \cite{Saeid2}. Recently, Motamed has introduced the notion of right stabilizers in BL-algebras and two class of BL-algebras, called RS-BL-algebras and semi RS-BL-algebras, and has discussed the relations between them and (semi)local BL-algebras, they also proposed some open problems related to stabilizers in \cite{Somayeh}, for example, ``Let $X$ be a nonempty subset of a BL-algebra $L$. Is $X_r\cup\{0\}$ a subalgebra of $L$?" and ``If $L$ is a RS-BL-algebra and $F$ is an any filter of $L$, then $(F_r)_r=F$?". After then, Turunen proved that RS-BL-algebras are equivalent to MV-algebras in \cite{turunen}.

In this paper, we will study stabilizers on MTL-algebras. One of our aims is to give answers to serval open problems related to stabilizers in BL-algebras in \cite{Somayeh}. On the other hand, the main focus of existing research about stabilizers on MV-algebras, BL-algebras, etc. All the above-mentioned algebraic structures satisfy the divisibility condition $x\wedge y=x\odot(x\rightarrow y)$. In this case, the conjunction $\odot$ on the unit interval corresponds to a continuous t-norm. However, there are few research about the stabilizer on residuated structures without the divisibility condition so far. Therefore, it is meaningful to study stabilizers in MTL-algebras for providing a solid algebraic foundation for consequence operations in MTL logic. This is the motivation for us to investigate stabilizer theory on MTL-algebras.

 This paper is structured in four sections. In order to make the paper as self-contained as possible, we recapitulate in Section 2 the definition of MTL-algebras, and review their basic properties. In Section 3, we introduce implicative stabilizers and characterize some special classes of MTL-algebras in terms of these stabilizers. In Section 4, we introduce multiplicative stabilizers and investigate related properties of them. Using multiplicative stabilizers, we give some characterizations of G\"{o}del algebras and linearly order G\"{o}del algebras. Finally, we discuss the relations between implicative stabilizers and  multiplicative stabilizers.

\section{Preliminaries}
In this section, we summarize some definitions and results about MTL-algebras, which will be used in this paper.
\begin{definition}\emph{\cite{Esteva} An algebraic structure $(L,\wedge,\vee,\odot,\rightarrow,0,1)$ of type $(2,2,2,2,0,0)$ is called an \emph{MTL-algebra} if it satisfies the following conditions:}
\begin{enumerate}[(1)]
  \item \emph{$(L,\wedge,\vee,0,1)$ is a bounded lattice,
  \item $(L,\odot,1)$ is a commutative monoid,
  \item $x\odot y\leq z$ if and only if $x\leq y\rightarrow z$,
  \item $(x\rightarrow y)\vee(y\rightarrow x)=1$,
  for any $x,y,z\in L$.}
\end{enumerate}
\end{definition}

In what follows, by $L$ we denote the universe of an MTL-algebra $(L,\wedge,\vee,\odot,\rightarrow,0,1)$.
For any $x\in L$ and a natural number $n$, we define $\neg x=x\rightarrow 0$, $\neg\neg x=\neg(\neg x)$, $x^0=1$ and $x^n=x^{n-1}\odot x$ for all $n\geq 1$.
\begin{proposition}\emph{\cite{Noguera} In any MTL-algebra $L$, the following properties hold: }
\begin{enumerate}[(1)]
  \item \emph{$x\leq y$ if and only if $x\rightarrow y=1$,
  \item $x\odot y\leq x\wedge y$,
  \item $x\rightarrow (y\wedge z)=(x\rightarrow y)\wedge(x\rightarrow z)$,
  \item $(x\vee y)\rightarrow z=(x\rightarrow z)\wedge(y\rightarrow z)$,
  \item $x\rightarrow y=x\rightarrow(x\wedge y)$,
  \item $x\rightarrow y=(x\vee y)\rightarrow y$,
  \item $x\wedge y\rightarrow z=(x\rightarrow z)\vee(y\rightarrow z)$,
  \item $x\vee y=((x\rightarrow y)\rightarrow y)\wedge((y\rightarrow x)\rightarrow x)$,
  \item $x\leq y\rightarrow x$,
 \item $\neg x=\neg\neg\neg x$, for all $x,y,z\in L$. }
\end{enumerate}
\end{proposition}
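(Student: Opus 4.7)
The plan is to derive all ten items from three workhorses: the adjoint relation $x\odot y\le z$ iff $x\le y\to z$, the commutative monoid structure with unit $1$, and the prelinearity axiom $(x\to y)\vee(y\to x)=1$. Before addressing individual items I would establish, by routine transposes, that $\odot$ is isotone in each argument, $\to$ is antitone in the first and isotone in the second, the basic cancellation $x\odot(x\to y)\le y$, and from it the inequality $x\le(x\to y)\to y$.

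Items (1), (2), and (9) then fall out immediately: (1) is adjointness applied to $1\odot x=x$; (2) follows from $y\le 1$ and isotonicity, twice; (9) transposes $x\odot y\le x$. Items (3)--(6) each reduce to two inequalities, with the easy side coming from monotonicity of $\to$. For the nontrivial direction in (3) I would transpose back to $\odot$ and bound each meet-piece using $x\odot(x\to y)\le y$. Item (4) is analogous, using the fact that $\odot$ distributes over $\vee$ (which is automatic from adjointness). Item (6) is then a direct corollary of (4), since $(x\vee y)\to y=(x\to y)\wedge(y\to y)=(x\to y)\wedge 1$. Item (5) uses the refined cancellation $x\odot(x\to y)\le x\wedge y$, which combines the adjoint inequality with (9) and (2). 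Item (10) is a short consequence of $x\le\neg\neg x$ and antitonicity of $\neg$, giving $\neg\neg\neg x\le\neg x$, while the reverse inequality is the same unit law applied to $\neg x$.

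The genuinely MTL-specific items are (7) and (8), where prelinearity must be invoked. The uniform technique here is the ``split by $1$'' trick: rewrite $a=a\odot 1=a\odot((x\to y)\vee(y\to x))$ and distribute $\odot$ over $\vee$ to obtain $a=(a\odot(x\to y))\vee(a\odot(y\to x))$. For (7), applied to $a=(x\wedge y)\to z$, the key step is the inequality $((x\wedge y)\to z)\odot(x\to y)\le x\to z$, which follows from $(x\to y)\odot x\le x\wedge y$ and adjointness; the symmetric bound handles the other summand. For (8), the same splitting applied to $((x\to y)\to y)\wedge((y\to x)\to x)$ together with $((x\to y)\to y)\odot(x\to y)\le y$ and its symmetric counterpart produces the nonobvious inequality $\le x\vee y$; the reverse inequality comes from $x\le(x\to y)\to y$, $x\le(y\to x)\to x$, and the symmetric statements for $y$.

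The main obstacle is organizing the prelinearity-based splitting in (7) and (8) so that the two summands land on the correct terms of the right-hand side. Once the ``split by $1$'' trick is isolated and the preliminary monotonicity facts are in hand, each individual computation is short; recognizing that this splitting is the right move, rather than trying to manipulate the meet directly, is the conceptual hurdle.
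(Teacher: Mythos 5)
This proposition is quoted from the literature and the paper supplies no proof of its own, so there is nothing to compare against; your argument is the standard one for residuated lattices plus prelinearity and it is correct. In particular, the two places where something beyond pure residuation is needed, namely (7) and (8), are handled exactly as in the cited sources: distribute $\odot$ over the join $(x\rightarrow y)\vee(y\rightarrow x)=1$ and bound each summand with the cancellation $a\odot(a\rightarrow b)\leq b$.
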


\begin{definition}\emph{\cite{Noguera} Let $L$ be an MTL-algebra. Then $L$ is called:}
\begin{enumerate}[(1)]
  \item \emph{a\emph{ BL-algebra} if $x\wedge y=x\odot(x\rightarrow y)$ for any $x,y\in L$,
  \item an\emph{ MV-algebra} if $(x\rightarrow y)\rightarrow y=(y\rightarrow x)\rightarrow x$ for any $x,y\in L$,
  \item a\emph{ G\"{o}del algebra} if $x\odot y=x\wedge y=x\odot(x\rightarrow y)$ for any $x,y\in L$,
  \item an\emph{ IMTL-algebra} if $\neg\neg x=x$ for any $x\in L$,
   \item an\emph{ integral MTL-algebra} if $x\odot y=0$, then $x=0$ or $y=0$ for any $x,y\in L$.}
\end{enumerate}
\end{definition}

A nonempty subset $F$ of $L$ is called a \emph{filter} of $L$ if it satisfies: (1) $x,y\in F$ implies $x\odot y\in F$; (2) $x\in F$, $y\in L$ and $x\leq y$ implies $y\in F$. We denote by $F[L]$ the set of all filers of $L$. A filter $F$ of $L$ is called a \emph{proper filter} if $F\neq L$. A proper filter $F$ of $L$ is called a \emph{prime filter} if for each $x,y\in L$, $x\vee y\in F$, implies $x\in F$ or $y\in F$. If $X$ is a nonempty subset of $L$, then we denote the filter generated by $X$ by $\langle X\rangle$. Clearly, we have $\langle X\rangle=\{x\in L|x\geq x_1\odot x_2\odot \cdots \odot x_n, x_i\in X\}=\{x\in L|x_1\rightarrow(x_2\rightarrow(\cdots(x_n\rightarrow x)\cdots))=1,x_i\in X\}$, see \cite{Esteva,Haveshki,Borzooei}.\\



If $(L,\wedge,\vee,\rightarrow,\odot,0,1)$ is an MTL-algebra, we denote by $G(L)$ the set of all idempotent elements of $(L,\odot,1)$. The set $G(L)$ is the universe of a G\"{o}del subalgebra of $L$, which is called the \emph{G\"{o}del center} of $L$\cite{Kowalski}.

\begin{proposition}\emph{\cite{Kowalski} Let $L$ be an MTL-algebra. For every $x,y\in L$ and $e\in G(L)$, we have:}
\begin{enumerate}[(1)]
  \item \emph{$e\odot e=e$,
  \item $e\odot(x\rightarrow y)=e\odot[(e\odot x)\rightarrow(e\odot y)]$.}
\end{enumerate}
\end{proposition}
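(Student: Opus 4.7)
My plan is to handle part (1) by simply unpacking the definition of $G(L)$: membership in the G\"odel center is defined to mean that the element is idempotent in $(L,\odot,1)$, so (1) is just a restatement of the hypothesis $e\in G(L)$.

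For part (2), I would prove the two inequalities
\[e\odot(x\rightarrow y)\leq e\odot[(e\odot x)\rightarrow(e\odot y)]\quad\text{and}\quad e\odot[(e\odot x)\rightarrow(e\odot y)]\leq e\odot(x\rightarrow y)\]
separately, using only residuation (Definition 2.1(3)), monotonicity of $\odot$, and the idempotency $e\odot e=e$ from part (1).

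The first inequality is the routine direction. Starting from the standard consequence of residuation $(x\rightarrow y)\odot x\leq y$, multiplying by $e$ gives $(x\rightarrow y)\odot(e\odot x)\leq e\odot y$, and one more application of residuation yields $x\rightarrow y\leq (e\odot x)\rightarrow(e\odot y)$. Multiplying through by $e$ produces the desired bound.

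The reverse inequality is the substantive direction, and I expect it to be the main obstacle: MTL-algebras do not satisfy divisibility, so the familiar BL-algebra maneuver of replacing $e\wedge u$ by $e\odot u$ is unavailable. The key idea is to invoke idempotency \emph{after} running residuation. Setting $a=(e\odot x)\rightarrow(e\odot y)$, the chain $a\odot(e\odot x)\leq e\odot y\leq y$ rearranges (by commutativity of $\odot$) to $(e\odot a)\odot x\leq y$, and residuation gives $e\odot a\leq x\rightarrow y$. This is strictly weaker than what is wanted, but multiplying once more by $e$ and collapsing $e\odot e=e$ on the left delivers
\[e\odot a=e\odot(e\odot a)\leq e\odot(x\rightarrow y),\]
which combined with the first inequality yields the claimed equality.
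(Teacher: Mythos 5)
Your proof is correct. Note that the paper itself does not prove this proposition: it is Proposition 2.4, quoted verbatim from the reference by Kowalski and Ono, so there is no in-paper argument to compare against. Your self-contained derivation is sound on every step. Part (1) is indeed just the definition of the G\"odel center $G(L)$ as the set of idempotents of $(L,\odot,1)$. For part (2), the easy direction $x\rightarrow y\leq (e\odot x)\rightarrow(e\odot y)$ follows exactly as you say from $(x\rightarrow y)\odot x\leq y$, monotonicity of $\odot$, and residuation. For the reverse direction, your key observation is the right one: with $a=(e\odot x)\rightarrow(e\odot y)$ one gets $(e\odot a)\odot x\leq e\odot y\leq y$, hence $e\odot a\leq x\rightarrow y$, and then idempotence lets you absorb the extra factor of $e$ via $e\odot a=(e\odot e)\odot a\leq e\odot(x\rightarrow y)$. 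This correctly avoids any appeal to divisibility, and in fact your argument uses only the commutative-monoid axioms, residuation, and $e\odot e=e$, so it establishes the identity in any commutative residuated lattice, which is slightly more general than the MTL setting of the paper.
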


A nonempty subset $I$ of an MTL-algebra $(L,\wedge,\vee,\rightarrow,\odot,0,1)$ is called a \emph{lattice
ideal} of $L$ if it satisfies: (i) for all $x,y\in I$, $x\vee y\in I$; (ii) for all $x,y\in L$, if $x\in I$
and $y\leq x$, then $y\in I$. That is, a lattice ideal of an MTL-algebra $L$ is the
notion of ideal in the underlying lattice $(L,\wedge,\vee)$. A lattice ideal $I$ of $L$ is called to be \emph{prime} if it satisfies for all $x,y\in L$, $x\wedge y\in I$
implies $x\in I$ or $y\in I$. For any nonempty subset $H$ of $L$, the smallest lattice ideal containing $H$ is
called the lattice ideal generated by $H$. The lattice ideal generated by $H$ will be
denoted by $(H]$. In particular, if $H =\{t\}$, we write $(t]$ for $(\{t\}]$, $(t]$ is called a
\emph{principal lattice ideal} of $L$. It is easy to check that $(t] =\downarrow t =\{x\in L|x\leq t\}$\cite {Gratzer}.
\section{Implicative stabilizers in MTL-algebras}

In the section, we investigate left and right implicative stabilizers and discuss the relation between them. Then, we give some characterizations of IMTL-algebras, integral MTL-algebras and MV-algebras via implicative stabilizers.

\begin{definition}\emph{Let $L$ be an MTL-algebra and $X$ be a nonempty subset of $L$. The \emph{left and right implicative stabilizer} of $X$ are defined as follows, respectively,}
\begin{center}$X_l=\{a\in L|a\rightarrow x=x$, \emph{for all} $x\in X\}$,\\
$X_r=\{a\in L|x\rightarrow a=a$, \emph{for all} $x\in X\}$.
\end{center}

\emph{The set $X_s=X_l\cap X_r$ is called the \emph{implicative stabilizer} of $X$. For convenience, the implicative stabilizer, left implicative stabilizer and right implicative stabilizer of $X=\{x\}$ are denoted by $S_x$, $L_x$ and $R_x$, respectively.}
\end{definition}

Now, we present some examples for implicative stabilizers in an MTL-algebra.
\begin{example}\emph{Let $L=\{0,a,b,1\}$ with $0\leq a\leq b\leq 1$. Consider the operations $\odot$ and $\rightarrow$ given by the following tables}:\\
\begin{center}
\begin{tabular}{c|cccc}

  $\odot$ & $0$ & $a$ & $b$ & $1$ \\
   \hline
  $0$ & $0$ & $0$ & $0$ & $0$ \\
  $a$ & $0$ & $0$ & $0$ & $a$ \\
  $b$ & $0$ & $0$ & $b$ & $b$ \\
  $1$ & $0$ & $a$ & $b$ & $1$ \\
\end{tabular}{\qquad}
\begin{tabular}{c|cccc}

  $\rightarrow$ & $0$ & $a$ & $b$ & $1$ \\
   \hline
  $0$ & $1$ & $1$ & $1$ & $1$ \\
  $a$ & $b$ & $1$ & $1$ & $1$ \\
  $b$ & $a$ & $a$ & $1$ & $1$ \\
  $1$ & $0$ & $a$ & $b$ & $1$ \\
\end{tabular}
\end{center}

\emph{Then $(L,\wedge,\vee,\odot,\rightarrow,0,1)$ is an MTL-algebra. If we put $X=\{b\}$, then $X_l=\{1\}\neq X_r=\{a,1\}$ and hence $X_s=\{1\}$, in this case, $X_r$ is not a filter of $L$}.
\end{example}

Example 3.2 shows that $X_r\neq X_l$ and $X_r$ is not a filter of an MTL-algebra, in general.

\begin{example}\emph{ Let $L=[0,1]$. For any $x,y\in L$, we define $x\wedge y=$min$\{x,y\}$,$x\vee y=$max$\{x,y\}$, $\neg x=1-x$, $x\rightarrow y=1$ if $x\leq y$; otherwise $x\rightarrow y=\neg x\vee y$, and $x\odot y=0$ if $x\leq \neg y$; otherwise $x\wedge y$.
Then $(L,\wedge,\vee,\odot,\rightarrow,0,1)$ is an MTL-algebra, $R_\frac{2}{3}=[\frac{1}{3},\frac{2}{3})$, $L_\frac{2}{3}=(\frac{2}{3},1]$, so $S_\frac{2}{3}=\emptyset$.}
\end{example}

The following proposition provides some useful properties of implicative stabilizers in an MTL-algebra.

\begin{proposition} \emph{Let $L$ be an MTL-algebra and $X,Y$ be two nonempty subsets of $L$. Then the following properties hold:}
\begin{enumerate}[(1)]
   \item \emph{$X_r=\cap_{x\in X}R_x$, $X_l=\cap_{x\in X}L_x$ and $X_s=\cap_{x\in X}S_x$,
  \item if $X\subseteq Y$, then $Y_r\subseteq X_r$, $Y_l\subseteq X_l$, and $Y_s\subseteq X_s$,
  \item $\langle X\rangle_r=X_r$,
  \item $X=\{1\}$ if and only if $X_l=X_r=X_s=L$,
  \item $L_r=L_l=L_s=\{1\}$,
  \item $R_0=\{1\}$ and so $S_0=\{1\}$,
  \item if $a,b\in X_r$, then $a\wedge b,a\rightarrow b\in X_r$, and so $a\vee b\in X_r$,
  \item $X_l$ is a filter of $L$,
  \item $\langle X\rangle \cap X_r=\{1\}=\langle X\rangle \cap X_s$.}
\end{enumerate}
\end{proposition}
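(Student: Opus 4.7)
The plan is to handle the nine items in three thematic bunches, with the bulk of real content concentrated in items (3), (7), (8) and (9). Items (1), (2), (5), (6) together with the forward direction of (4) are pure unpacking of definitions. For (1), rewrite the universal quantifier over $x \in X$ in each definition as an intersection of singleton stabilizers $R_x,L_x,S_x$. Item (2) is the monotonicity of a solution set under added constraints. For (5) and (6), substitute carefully chosen elements: for $L_r$ take $x=0$ in the requirement $0 \to a = a$ to force $a=1$; for $L_l$ take $x=a$ to force $a=1$; for $R_0$ use $0 \to a = 1$. For the forward direction of (4), note $1 \to a = a$ and $a \to 1 = 1$ hold universally.

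For the converse of (4), I would substitute $a=x$ into $x \to a = a$ (if $X_r=L$) to get $x=1$, or $a=0$ into $a \to x = x$ (if $X_l=L$) to get $x=1$, in either case forcing $X=\{1\}$. For (7), the computations are $x \to (a \wedge b) = (x \to a) \wedge (x \to b) = a \wedge b$ by Proposition 2.2(3), and $x \to (a \to b) = (x \odot a) \to b = a \to (x \to b) = a \to b$ by adjunction together with $b \in X_r$. A stronger fact falls out of the latter: for any $c \in L$ and any $d \in X_r$, one has $c \to d \in X_r$. Combining this closure with Proposition 2.2(8), which writes $a \vee b$ as a meet of expressions of the form $(u \to v) \to v$ with $u,v \in X_r$, yields $a \vee b \in X_r$. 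For (8), check the two filter axioms: $(a \odot b) \to x = a \to (b \to x) = a \to x = x$ handles closure under $\odot$, and $a \leq b$ gives $b \to x \leq a \to x = x$ while Proposition 2.2(9) supplies $x \leq b \to x$, hence equality.

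The main obstacle, and the shared technical core of items (3) and (9), is the iterated residuation identity
\[ (x_1 \odot x_2 \odot \cdots \odot x_n) \to a \;=\; x_1 \to \bigl(x_2 \to \cdots (x_n \to a) \cdots\bigr) \;=\; a, \]
where the first equality is repeated application of adjunction and the second is an inside-out collapse using $x_i \to a = a$ at each layer, valid whenever $a \in X_r$ and all $x_i \in X$. Given this, item (3) follows by taking $y \in \langle X \rangle$ with $y \geq x_1 \odot \cdots \odot x_n$, deducing $y \to a \leq (x_1 \odot \cdots \odot x_n) \to a = a$ and combining with $a \leq y \to a$ from Proposition 2.2(9); the reverse inclusion $\langle X \rangle_r \subseteq X_r$ comes from (2). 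For item (9), any $a \in \langle X \rangle \cap X_r$ satisfies both $a \geq x_1 \odot \cdots \odot x_n$ (so $(x_1 \odot \cdots \odot x_n) \to a = 1$) and $(x_1 \odot \cdots \odot x_n) \to a = a$ by the identity above, forcing $a=1$; since $1 \in X_r \cap X_l$ trivially, this also gives $\langle X \rangle \cap X_s = \{1\}$.
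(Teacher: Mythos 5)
Your proposal is correct and follows essentially the same route as the paper: the paper likewise dismisses (1), (2), (4), (5), (6), (8), (9) as easy and proves (3) via the collapse of the iterated implication $x_1\rightarrow(\cdots(x_n\rightarrow a)\cdots)=a$ combined with $a\leq y\rightarrow a$, and (7) via Proposition 2.2(3), the exchange identity for $\rightarrow$, and Proposition 2.2(8). Your write-up of (3) is in fact slightly cleaner than the paper's (which states the two inequalities in a somewhat garbled order), and your observation that $c\rightarrow d\in X_r$ for arbitrary $c\in L$ is a correct mild strengthening, but nothing in substance differs.
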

\begin{proof} The proofs of $(1),(2),(4),(5),(6),(8)$ and $(9)$ are easy.

$(3)$  Since $X\subseteq \langle X\rangle$, by (2), we have $\langle X\rangle_r\subseteq X_r$. On the other hand, suppose that $a\in X_r$, and so $x\rightarrow a=a$, for all $x\in X$.  For any $y\in \langle X\rangle$, there exist $y_1,y_2\cdots y_n\in X$ such that $y_1\rightarrow(y_2\rightarrow(\cdots (y_n\rightarrow y))=1$, hence $y\rightarrow a\leq y_1\rightarrow(y_2\rightarrow\cdots y_n)\rightarrow a$. Thus, $a=y_1\rightarrow a=y_1\rightarrow(y_2\rightarrow a)\cdots y_1\rightarrow(y_2\rightarrow y_3\rightarrow\cdots(y_n\rightarrow a)))\leq y\rightarrow a$. On the other hand, we have $y\rightarrow a\leq a$ for all $y\in\langle X\rangle$, and so $a\in \langle X\rangle_r$, that is, $\langle X\rangle_r=X_r$.

$(7)$ For all $a,b\in X_r$, from Proposition 2.2(3), we have $x\rightarrow(a\wedge b)=(x\rightarrow a)\wedge (x\rightarrow b)=a\wedge b$, that is, $a\wedge b\in X_r$. Now, we will show that for any $a,b\in X_r$, $a\rightarrow b\in X_r$. Since $a\rightarrow b=(x\rightarrow a)\rightarrow(x\rightarrow b)=x\rightarrow((x\rightarrow a)\rightarrow b)=x\rightarrow(a\rightarrow b)$ and hence $a\rightarrow b\in X_r$. Also, from Proposition 2.2(8), one can prove that $a\vee b\in X_r$, for any $a,b\in X_r$.
\end{proof}

 The following example shows $X_l\neq \langle X\rangle_l$, for any nonempty subset $X$ of $L$, not holds, in general.

\begin{example}\emph{Let $L=\{0,a,b,c,1\}$ with $0\leq c\leq a,b\leq 1$. Consider the operations $\odot$ and $\rightarrow$ given by the following tables}:\\
\begin{center}
\begin{tabular}{c|ccccc}

  $\odot$ & $0$ & $a$ & $b$ & $c$ & $1$ \\
   \hline
  $0$ & $0$ & $0$ & $0$ & $0$ & $0$ \\
  $a$ & $0$ & $c$ & $a$ & $c$ & $a$ \\
  $b$ & $0$ & $c$ & $c$ & $b$ & $b$ \\
  $c$ & $0$ & $c$ & $c$ & $c$ & $c$ \\
  $1$ & $0$ & $a$ & $b$ & $c$ & $1$ \\
\end{tabular}{\qquad}
\begin{tabular}{c|ccccc}

  $\rightarrow$ & $0$ & $a$ & $b$ & $c$ & $1$ \\
   \hline
  $0$ & $1$ & $1$ & $1$ & $1$ & $1$ \\
  $a$ & $0$ & $b$ & $1$ & $b$ & $1$ \\
  $b$ & $0$ & $a$ & $a$ & $1$ & $1$ \\
  $c$ & $0$ & $1$ & $1$ & $1$ & $1$ \\
  $1$ & $0$ & $a$ & $b$ & $c$ & $1$ \\
\end{tabular}
\end{center}

\emph{Then $(L,\wedge,\vee,\odot,\rightarrow,0,1)$ is an MTL-algebra. If $X=\{b\}$, then $X_l=\{a,1\}$ and $\langle X\rangle_l=\{1\}$. Hence $X_l\neq \langle X\rangle_l$.}
\end{example}


 The following theorem shows $X_l=\langle X\rangle_l$, for any nonempty subset $X$, under certain conditions in an MTL-algebra.

\begin{theorem}\emph{Let $L$ be an MTL-algebra. Then the following conditions are equivalent:}
\begin{enumerate}[(1)]
  \item \emph{$X_l=\langle X\rangle_l$, for any nonempty subset $X$ of $L$,
  \item $a\rightarrow b=b$ if and only if $b\rightarrow a=a$, for any $a,b\in L$,
  \item $X_r$ is a filter of $L$, for any nonempty subset $X$ of $L$,
  \item $S_x=R_x=L_x$, for all $x\in L$,
  \item $X_r=X_l=X_s$, for all nonempty subset $X$ of $L$.}
\end{enumerate}
\end{theorem}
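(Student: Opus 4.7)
The plan is to establish a cycle $(1)\Rightarrow(2)\Rightarrow(4)\Rightarrow(5)\Rightarrow(1)$ together with $(2)\Leftrightarrow(3)$. Conditions (2), (4), and (5) are essentially equivalent reformulations of the same single-element fact: (2) says $L_b=R_b$ for every $b\in L$, so $S_b=L_b\cap R_b$ coincides with both, giving (4); using Proposition~3.4(1), i.e.\ $X_l=\bigcap_{x\in X}L_x$ and likewise for the other stabilizers, one lifts (4) to (5), and recovers (4) from (5) by taking $X=\{x\}$. For $(5)\Rightarrow(1)$ I combine Proposition~3.4(3) (which always gives $X_r=\langle X\rangle_r$) with (5) applied to both $X$ and $\langle X\rangle$: $X_l=X_r=\langle X\rangle_r=\langle X\rangle_l$.

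The heart of the proof is the pair of implications $(1)\Rightarrow(2)$ and $(3)\Rightarrow(2)$, and both rest on the same pivot, namely forcing $a\vee b=1$ via upward closure and Proposition~2.2(6). For $(1)\Rightarrow(2)$, assume $a\rightarrow b=b$, so $a\in L_b$; applying (1) to $X=\{b\}$ gives $a\in\langle b\rangle_l$, and since $\langle b\rangle$ is a filter containing $b$ it also contains $a\vee b$, so by definition $a\rightarrow(a\vee b)=a\vee b$. But $a\leq a\vee b$ makes the left side equal $1$, hence $a\vee b=1$, and Proposition~2.2(6) yields $b\rightarrow a=(a\vee b)\rightarrow a=1\rightarrow a=a$, which is one direction of (2); the other direction follows by swapping $a$ and $b$ in the universal hypothesis. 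The argument for $(3)\Rightarrow(2)$ is parallel: from $b\rightarrow a=a$, i.e.\ $a\in R_b$, the hypothesis that $R_b$ is a filter gives $a\vee b\in R_b$, whence $b\rightarrow(a\vee b)=a\vee b$; but again $b\leq a\vee b$ forces the left side to be $1$, so $a\vee b=1$ and then $a\rightarrow b=(a\vee b)\rightarrow b=b$.

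The converse $(2)\Rightarrow(3)$ is a short bookkeeping step: under (2) we have $R_x=L_x$, which is a filter by Proposition~3.4(8), so $X_r=\bigcap_{x\in X}R_x$ is an intersection of filters and hence a filter. The main obstacle is recognizing the common mechanism behind $(1)\Rightarrow(2)$ and $(3)\Rightarrow(2)$: although the hypotheses look quite different, each produces $a\vee b$ inside the relevant stabilizer via upward closure (either because $\langle b\rangle$ is a filter of $L$ or because $R_b$ is assumed to be one), and Proposition~2.2(6) then collapses the problem to the trivial case $a\vee b=1$. Everything else is routine manipulation with Proposition~3.4 and the definitions.
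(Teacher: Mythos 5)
Your proof is correct, and it establishes all five equivalences via the cycle $(1)\Rightarrow(2)\Rightarrow(4)\Leftrightarrow(5)\Rightarrow(1)$ plus the side equivalence $(2)\Leftrightarrow(3)$. The architecture is close to the paper's, which instead runs $(1)\Rightarrow(2)\Rightarrow(3)\Rightarrow(4)\Leftrightarrow(5)\Rightarrow(1)$, so condition (3) sits inside the main cycle there rather than hanging off (2). The more interesting difference is the pivot element in the two hard implications: you push $a\vee b$ into the relevant filter ($\langle b\rangle$ for $(1)\Rightarrow(2)$, $R_b$ for $(3)\Rightarrow(2)$), force $a\vee b=1$ from $a\rightarrow(a\vee b)=1$ resp.\ $b\rightarrow(a\vee b)=1$, and finish with Proposition~2.2(6). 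The paper instead uses the element $(b\rightarrow a)\rightarrow a$ (which is one of the two meetands of $a\vee b$ in Proposition~2.2(8)) together with $a\leq b\rightarrow a$, and for the step out of (3) it invokes Proposition~3.4(9), i.e.\ $\langle x\rangle\cap R_x=\{1\}$. Your version buys a pleasant uniformity --- both nontrivial implications reduce to the single observation that upward closure plus $x\rightarrow y=(x\vee y)\rightarrow y$ collapses everything to the case $a\vee b=1$ --- and it bypasses Proposition~3.4(9) entirely; the paper's version works directly with implication terms and never names $a\vee b=1$ explicitly. Both are valid, and the underlying mechanism (land on an element above both $a$ and $b$ inside a stabilizer that is upward closed) is the same.
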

\begin{proof}$(1)\Rightarrow (2)$ Let $a,b\in L$ be such that $a\rightarrow b=b$, that is, $a\in L_b$. Then we have $a\in \langle b\rangle_l$ by (1). Since $((b\rightarrow a)\rightarrow a\in \langle b\rangle$, we have $a\in L_{(b\rightarrow a)\rightarrow a}$ and $1=(b\rightarrow a)\rightarrow (a\rightarrow a)=a\rightarrow[(b\rightarrow a)\rightarrow a]=(b\rightarrow a)\rightarrow a$, and finally $b\rightarrow a=a$. Similarly, we can prove that $b\rightarrow a=a$ implies $a\rightarrow b=a$ for any $a,b\in L$.

$(2)\Rightarrow (3)$ From (2), we have $R_x=\{a\in L|x\rightarrow a=a\}=\{a\in L|a\rightarrow x=x\}=L_x$ for any $x\in L$. From Proposition 3.4(8), we know that $L_x$ is a filter of $L$ and hence $R_x$ is a filter of $L$, for any $x\in L$.

$(3)\Rightarrow (4)$ By hypothesis, for any $x\in L$, $R_x$ is a filter of $L$. Let $a\in R_x$. Since $a\leq(a\rightarrow x)\rightarrow x$ and hence $(a\rightarrow x)\rightarrow x\in R_x$. Also, $x\leq (a\rightarrow x)\rightarrow x$, for $x \in\langle x\rangle$, and so $(a\rightarrow x)\rightarrow x\in R_x\cap \langle x\rangle=\{1\}$, by Proposition 3.4(9). Therefore, $a\rightarrow x=x$, for all $x\in X$, that is, $a\in R_x\cap L_x=S_x$, which implies $R_x\subseteq S_x$. On the other hand, we have $S_x\subseteq R_x$, thus $S_x=R_x=L_x$.

$(4)\Leftrightarrow (5)$ It follows from Proposition 3.4(1).

$(5)\Rightarrow (1)$ It follows from (5) and Proposition 3.4(3).
\end{proof}


From Definition 2.3(3), we note that a G\"{o}del algebra is an MTL-algebra satisfies $x\odot y=x\wedge y$, for all $x,y\in L$. Applying Proposition 3.4(7), one can obtain that the set $X_r$ is closed under the operations $\wedge,\vee,\rightarrow$. Now, we have the following natural questions:
\begin{enumerate}[(1)]
  \item For any nonempty subset $X$ of a G\"{o}del algebra $L$, is $X_r\cup\{0\}$ a subalgebra of $L$?
  \item Whether there exists a nonempty subset $X$ such that $X_r$ is a subalgebra of a G\"{o}del algebra $L$?
\end{enumerate}

For the first question, we have a negative answer as the following example shows:
\begin{example}\emph{Let $L=\{0,a,b,c,d,1\}$, where $0\leq a\leq c\leq 1$, $0\leq b\leq c,d\leq 1$. Define operations $\odot$ and $\rightarrow$ as follows}:\\
\begin{center}
\begin{tabular}{c|c c c c c c}
   $\odot$ & $0$ & $a$ & $b$ & $c$ & $d$ & $1$\\
   \hline
   $0$ & $0$ & $0$ & $0$ & $0$ & $0$ & $0$ \\
   $a$ & $0$ & $a$ & $0$ & $a$ & $0$ & $a$ \\
   $b$ & $0$ & $0$ & $b$ & $b$ & $b$ & $b$ \\
   $c$ & $0$ & $a$ & $b$ & $c$ & $b$ & $c$\\
   $d$ & $0$ & $0$ & $b$ & $b$ & $d$ & $d$ \\
   $1$ & $0$ & $a$ & $b$ & $c$ & $d$ & $1$
 \end{tabular} {\qquad}
\begin{tabular}{c|c c c c c c}
   $\rightarrow$ & $0$ & $a$ & $b$ & $c$ & $d$ & $1$\\
   \hline
   $0$ & $1$ & $1$ & $1$ & $1$ & $1$ & $1$ \\
   $a$ & $d$ & $1$ & $d$ & $1$ & $d$ & $1$ \\
   $b$ & $a$ & $a$ & $1$ & $1$ & $1$ & $1$ \\
   $c$ & $0$ & $a$ & $d$ & $1$ & $d$ & $1$\\
   $d$ & $a$ & $a$ & $c$ & $c$ & $1$ & $1$ \\
   $1$ & $0$ & $a$ & $b$ & $c$ & $d$ & $1$
 \end{tabular}
\end{center}

\emph{Then $(L,\wedge,\vee,\odot,\rightarrow,0,1)$ is a G\"{o}del algebra. Let $X=\{b\}$, then $X_r\cup\{0\}=\{0,a,1\}$ is not a subalgebra of $L$ since $a\rightarrow 0=d\notin X_r\cup\{0\}$.}
\end{example}

\begin{remark}\emph{ It was described that in \cite{Somayeh} that if $L$ is a G\"{o}del algebra and $X$ is a nonempty subset of $L$, then $X_r\cup\{0\}$ is a subalgebra of $L$. From Example 3.7, we know that the above statement is not true. Moreover, Example 3.7 also gives a negative answer to the open problem in \cite{Somayeh} that ``Let $X$ be a nonempty subset of a BL-algebra $L$. Is $X_r\cup\{0\}$ a subalgebra of $L$?" since a G\"{o}del algebra is a subclass of a BL-algebra.}
\end{remark}

As to the second question, we have the following proposition.

\begin{proposition}\emph{Let $L$ be a G\"{o}del algebra and $X$ a nonempty subset of $L$. Then $(L_0)_r$ is a subalgebra of $L$.}
\end{proposition}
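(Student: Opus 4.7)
The plan is to verify directly that $(L_0)_r$ contains $0$ and $1$ and is closed under each of the four operations $\wedge$, $\vee$, $\odot$, $\rightarrow$. Most of the work is already carried by Proposition 3.4(7); the G\"{o}del hypothesis will be used only to dispatch the $\odot$ case.

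First I would observe that $1 \in (L_0)_r$, since $a \rightarrow 1 = 1$ for every $a \in L$, and that $0 \in (L_0)_r$ as well. This last point is the crux, yet it is almost tautological: by the very definition $L_0 = \{a \in L : a \rightarrow 0 = 0\}$, every $a \in L_0$ satisfies $a \rightarrow 0 = 0$, which is precisely the condition for $0$ to lie in the right implicative stabilizer of $L_0$.

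Next, Proposition 3.4(7) tells us that $(L_0)_r$ is closed under $\wedge$, $\vee$, and $\rightarrow$. In particular, whenever $a \in (L_0)_r$, the element $a \rightarrow 0$ returns to $(L_0)_r$, because both $a$ and $0$ are now genuine members of $(L_0)_r$. For closure under $\odot$ I would invoke the G\"{o}del hypothesis of Definition 2.3(3), namely $\odot = \wedge$, so closure under $\odot$ coincides with the closure under $\wedge$ already established.

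I do not anticipate any genuine obstacle; the argument is essentially bookkeeping. The point worth emphasizing is the contrast with Example 3.7: there $0 \notin X_r$ for $X = \{b\}$, and the ad hoc remedy $X_r \cup \{0\}$ fails precisely because $a \rightarrow 0$ need not return to the set. Taking $X = L_0$ sidesteps the pathology by forcing $0$ to enter $(L_0)_r$ as a bona fide member, so that Proposition 3.4(7) applies uniformly and no extra element has to be adjoined by hand.
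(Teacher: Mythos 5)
Your proof is correct and follows essentially the same route as the paper's: note that $0,1\in (L_0)_r$ directly from the definition of $L_0$, apply Proposition 3.4(7) for closure under $\wedge,\vee,\rightarrow$, and use $\odot=\wedge$ in a G\"{o}del algebra for the remaining operation. The closing remark contrasting with Example 3.7 is accurate but not needed for the argument.
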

\begin{proof} For any $x\in L_0$, we have $x\rightarrow 1=1$ and $x\rightarrow 0=0$ and so $0,1\in (L_0)_r$. By Proposition 3.4(7), we have $(L_0)_r$ is closed under the operations $\wedge,\vee,\rightarrow$. Therefore, $(L_0)_r$ is a subalgebra of $L$.
\end{proof}

From the Proposition 3.4(8), it is natural to ask that whether there exists a nonempty subset $X$ such that $X_l=F$ for given filter $F$ in an MTL-algebra $L$. For this question, we give the positive answer under some suitable conditions by Theorem 3.6 if $F=\langle X\rangle_l$. Furthermore, we have the {\bf  first open problem:}

\begin{enumerate}[(1)]
  \item For any filter $F$ of a MTL-algebra $L$, whether there exists a nonempty subset $X$ such that $X_l=F$?
\end{enumerate}



The following theorem shows that implicative stabilizer $X_s$ is equivalent to  $^\bot X=\{a\in L|a\vee x=1$, for all $x\in X\}$, which was introduced in \cite{Turunen}.

\begin{theorem}\emph{Let $L$ be an MTL-algebra and $X$ a nonempty subset of $L$. Then $^\bot X= X_r\cap X_l=X_s$.}
\end{theorem}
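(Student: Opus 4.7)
The plan is to exploit Proposition 2.2(8), which expresses the lattice join in terms of iterated implications: $a \vee x = ((a \rightarrow x) \rightarrow x) \wedge ((x \rightarrow a) \rightarrow a)$. Since an element of a bounded lattice equals $1$ precisely when both meetands equal $1$, the condition $a \vee x = 1$ splits into the conjunction of $(a \rightarrow x) \rightarrow x = 1$ and $(x \rightarrow a) \rightarrow a = 1$.

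Next, I would observe that each of these equalities collapses to a stabilizer-type identity. By Proposition 2.2(1), $(a \rightarrow x) \rightarrow x = 1$ is equivalent to $a \rightarrow x \leq x$; combined with the always-valid inequality $x \leq a \rightarrow x$ from Proposition 2.2(9), this forces $a \rightarrow x = x$. The reverse implication is trivial ($x \rightarrow x = 1$). A symmetric argument using $a \leq x \rightarrow a$ shows that $(x \rightarrow a) \rightarrow a = 1$ is equivalent to $x \rightarrow a = a$.

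Putting these two equivalences together, $a \vee x = 1$ holds if and only if both $a \rightarrow x = x$ and $x \rightarrow a = a$, i.e.\ if and only if $a$ stabilizes $x$ from the left and from the right. Quantifying universally over $x \in X$ then gives
\[
a \in {}^\bot X \iff a \in X_l \text{ and } a \in X_r \iff a \in X_l \cap X_r = X_s,
\]
which is the desired equality.

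There is no real obstacle here: every step is a direct application of a listed property from Proposition 2.2, and the bridge from ``$a \vee x = 1$'' to ``implications collapse'' is exactly what the canonical expression of $\vee$ via $\rightarrow$ in an MTL-algebra was built to provide. The only minor subtlety is remembering that the meet in Proposition 2.2(8) being $1$ requires \emph{both} factors to be $1$ (which uses only that $1$ is the top of the lattice), so the argument cleanly factorizes into the left and right stabilizer conditions without needing any further MTL-specific machinery.
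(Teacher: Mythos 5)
Your proof is correct and follows essentially the same route as the paper: both rest on the identity $a\vee x=((a\rightarrow x)\rightarrow x)\wedge((x\rightarrow a)\rightarrow a)$ from Proposition 2.2(8), split the condition $a\vee x=1$ into the two factors, and collapse each to a stabilizer identity using $x\leq a\rightarrow x$ (resp. $a\leq x\rightarrow a$) from Proposition 2.2(9). Your write-up is in fact cleaner, since the paper's citations of "Proposition 2.2(10)/(11)" for these facts are off by a couple of indices.
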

\begin{proof} Let $a\in L$ such that $a\vee x=1$, for all $x\in X$. By Proposition 2.2(10), we have $1=x\vee a=((x\rightarrow a)\rightarrow a)\wedge ((a\rightarrow x)\rightarrow x)$, hence $(x\rightarrow a)\rightarrow a=1$ and $(a\rightarrow x)\rightarrow x=1$. Suppose that $(x\rightarrow a)\rightarrow a=1$, then by Proposition 2.2(11), $a\leq a\rightarrow x$, and so $a\rightarrow(x\rightarrow a)=1$. Thus, $x\rightarrow a=a$, for all $x\in X$. Therefore, $a\in X_r$. In the similar way, we can see that $a\rightarrow x=x$, for all $x\in X$, and hence $a\in X_l$. Therefore, $a\in X_r\cap X_l$. Conversely, if $a\in X_r\cap X_l$, then we have $x\rightarrow a=a$ and $a\rightarrow x=x$, for all $x\in X$. Thus $x\rightarrow a\rightarrow a=1$ and $a\rightarrow x\rightarrow x=1$, for all $x\in X$ and hence $a\vee x=((x\rightarrow a)\rightarrow a)\wedge ((a\rightarrow x)\rightarrow x)=1$, that is, $a\in ^\bot X$. Therefore,$^\bot X=  X_r\cap X_l=X_s$.
\end{proof}



In the following theorems, we give some characterizations of IMTL-algebras and integral MTL-algebras via implicative stabilizers.
\begin{theorem}\emph{Let $L$ be an MTL-algebra. Then the following assertions are equivalent:}
\begin{enumerate}[(1)]
  \item \emph{$L$ is an IMTL-algebra,
  \item $L_0=S_0=^\bot\{0\}$,
  \item if $x\rightarrow y,y\rightarrow x\in L_0$, then $x=y$, for any $x,y\in L$.}
\end{enumerate}
\end{theorem}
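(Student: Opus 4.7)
The plan is to close the cycle $(1)\Rightarrow(2)\Rightarrow(3)\Rightarrow(1)$, with the last implication carrying the substantive content. For $(1)\Rightarrow(2)$, Proposition 3.4(6) yields $R_0=\{1\}$, so $S_0=L_0\cap R_0\subseteq\{1\}$; since $1\in L_0$ always, $S_0=\{1\}$, and Theorem 3.10 at $X=\{0\}$ supplies $S_0={}^{\bot}\{0\}$ for free. The only nontrivial inclusion is $L_0\subseteq\{1\}$, which is immediate from involutivity: if $\neg a=0$ then $a=\neg\neg a=\neg 0=1$. For $(2)\Rightarrow(3)$, the hypothesis forces $L_0=\{1\}$, so the antecedent $x\rightarrow y,\, y\rightarrow x\in L_0$ becomes $x\rightarrow y=1=y\rightarrow x$, i.e.\ $x\leq y\leq x$, whence $x=y$ by antisymmetry of the underlying order.

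For $(3)\Rightarrow(1)$, I fix an arbitrary $a\in L$ and apply (3) to the pair $(x,y)=(a,\neg\neg a)$. One direction is free: from $a\leq\neg\neg a$ (Proposition 2.2(9) applied to the definition of $\neg\neg a$) we get $a\rightarrow\neg\neg a=1$, and $1\in L_0$ since $\neg 1=0$. The crux is to verify that $\neg\neg a\rightarrow a\in L_0$, equivalently $\neg(\neg\neg a\rightarrow a)=0$. Granted this, (3) concludes $a=\neg\neg a$ for every $a\in L$, and $L$ is an IMTL-algebra.

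The main obstacle is precisely this verification. My plan is to exploit Proposition 2.2(10), $\neg x=\neg\neg\neg x$, together with the residuation adjunction $x\odot y\leq z\Leftrightarrow x\leq y\rightarrow z$. Starting from $\neg\neg a\odot(\neg\neg a\rightarrow a)\leq a$, I would push $\neg$ through the adjunction to express $\neg(\neg\neg a\rightarrow a)$ in terms of $\neg\neg a\odot\neg a$; this latter product vanishes because $\neg\neg a=\neg a\rightarrow 0$ gives $\neg\neg a\odot\neg a\leq 0$ directly by residuation. Should this direct chase stall, a fallback is to first extract $L_0=\{1\}$ from (3) by applying it to $(1,b)$ for any $b\in L_0$ (which forces $b=1$), and then argue $\neg\neg a\rightarrow a\in\{1\}$ via a separate prelinearity/residuation argument; this two-stage route keeps the reduction to IMTL modular even if the one-shot computation proves finicky.
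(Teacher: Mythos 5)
Your $(1)\Rightarrow(2)$ and $(2)\Rightarrow(3)$ are fine and essentially the paper's. The problem is the step you yourself flag as the crux of $(3)\Rightarrow(1)$: the identity $\neg(\neg\neg a\rightarrow a)=0$ is \emph{not} valid in MTL-algebras, and the residuation chase you outline cannot establish it. What the adjunction actually yields is this: from $a\le \neg\neg a\rightarrow a$ and from $\neg a\odot\neg\neg a\le 0\le a$ (so $\neg a\le\neg\neg a\rightarrow a$) one gets, writing $t=\neg(\neg\neg a\rightarrow a)$, that $t\le\neg a$ and $t\le\neg\neg a$, hence $t\odot t\le\neg a\odot\neg\neg a=0$. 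So the computation terminates at $t^2=0$, and without contraction (which MTL lacks) you cannot pass from $t^2=0$ to $t=0$. Your fallback route has the same defect: after extracting $L_0=\{1\}$ from (3), you still need $\neg\neg a\rightarrow a\in L_0$, i.e.\ the very same identity.

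The gap is not repairable, because the implication $(3)\Rightarrow(1)$ (equivalently $(2)\Rightarrow(1)$) is false as stated. Take the chain $L=\{0,p,q,1\}$ with $0<p<q<1$, $1\odot y=y$ and $x\odot y=0$ whenever $x,y\le q$ (a finite chain with any monotone commutative monoid operation is residuated, and prelinearity is automatic). Then $p\rightarrow 0=q\rightarrow 0=q$ and $1\rightarrow 0=0$, so $L_0=\{1\}=S_0={}^{\bot}\{0\}$ and conditions (2) and (3) both hold; yet $\neg\neg p=\neg q=q\ne p$, so $L$ is not an IMTL-algebra. You should be aware that the paper's own proof commits exactly the error you were heading toward: it asserts ``$x\rightarrow\neg\neg x=1$, and so $\neg(\neg\neg x\rightarrow x)=0$'' with no justification, and that inference is where the theorem breaks. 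The equivalence does go through in settings where the Glivenko-type identity $\neg\neg(\neg\neg x\rightarrow x)=1$ is available (e.g.\ BL-algebras), so if you want a correct statement you must either add such a hypothesis or restrict the class of algebras; over bare MTL, no proof of $(3)\Rightarrow(1)$ exists.
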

\begin{proof} $(1)\Rightarrow(2)$ Let $L$ be an IMTL-algebra and $a\in L_0$. Then $a\rightarrow 0=0$ and so $a=\neg\neg a=(a\rightarrow 0)\rightarrow 0=0\rightarrow 0=1$. Thus, $L_0=\{1\}$. Furthermore, from Proposition 3.4(6) and Theorem 3.9, we have $L_0=S_0=^\bot \{0\}$.

$(2)\Rightarrow(1)$ If $L_0=S_0=^\bot \{0\}$, then $L_0=\{1\}$. Now, for any $x\in L$, we have $x\rightarrow \neg\neg x=x\rightarrow((x\rightarrow 0)\rightarrow 0)=(x\rightarrow 0)\rightarrow (x\rightarrow 0)=1$, and so $\neg(\neg\neg x\rightarrow x)=0$, then $(\neg\neg x\rightarrow x)\rightarrow 0=0$. Thus, $\neg\neg x\rightarrow x\in L_0=\{1\}$. Therefore, $\neg\neg x\rightarrow x=1$ and so $\neg\neg x=x$ for any $x\in L$. Thus, $L$ is an IMTL-algebra.

$(2)\Rightarrow(3)$ Let $x\rightarrow y,y\rightarrow x\in L_0$. Since $L_0=^\bot \{0\}=\{1\}$, then $x\rightarrow y=y\rightarrow x=1$ and so $x=y$.

$(3)\Rightarrow(2)$ Let $x\in L_0$. Since $1\in L_0$, we have $x\rightarrow 1=1\in L_0$ and $1\rightarrow x=x\in L_0$, then by (3), we obtain that $x=1$. Hence $L_0=\{1\}$. Therefore, $L_0=S_0=^\bot \{0\}$.
\end{proof}

\begin{theorem}\emph{Let $L$ be an MTL-algebra. Then the following assertions are equivalent:}
\begin{enumerate}[(1)]
  \item \emph{$L$ is an integral MTL-algebra,
  \item $L_0=L\backslash \{0\}$.}
\end{enumerate}
\end{theorem}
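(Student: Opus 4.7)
The plan is to unfold the definition $L_0 = \{a \in L \mid a \rightarrow 0 = 0\} = \{a \in L \mid \neg a = 0\}$ and then prove the two directions separately, using the residuation adjunction $x \odot y \leq z \iff x \leq y \rightarrow z$ to convert products into implications and vice versa.

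For the direction $(1)\Rightarrow(2)$, I would first observe that $0 \notin L_0$ because $0 \rightarrow 0 = 1 \neq 0$, so it suffices to show that every $a \neq 0$ lies in $L_0$. The key identity is $a \odot \neg a = a \odot (a \rightarrow 0) = 0$, which follows directly from the adjunction applied to $a \rightarrow 0 \leq a \rightarrow 0$. Integrality then forces $a = 0$ or $\neg a = 0$; since $a \neq 0$ by assumption, we conclude $\neg a = 0$, i.e., $a \in L_0$.

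For the direction $(2)\Rightarrow(1)$, assume $L_0 = L \setminus \{0\}$ and suppose $x \odot y = 0$ for some $x, y \in L$. To show $L$ is integral, I aim to prove that if $x \neq 0$, then $y = 0$. From $x \odot y = 0$ and the adjunction, $y \leq x \rightarrow 0 = \neg x$. But $x \neq 0$ together with the hypothesis gives $x \in L_0$, so $\neg x = 0$; thus $y \leq 0$, i.e., $y = 0$.

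I do not expect any real obstacle here: the only subtlety is recognizing that the adjunction is doing all the work, once via $a \odot \neg a = 0$ in the forward direction and once via $y \leq \neg x$ in the backward direction. The membership $0 \notin L_0$ is the only boundary check to remember when verifying set equality, and it is immediate from $0 \rightarrow 0 = 1$.
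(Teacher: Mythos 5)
Your proof is correct. The forward direction rests on the same key identity as the paper's, namely $x\odot\neg x=0$, but you apply the definition of integrality to it directly to get $x=0$ or $\neg x=0$, whereas the paper detours through the claim that $\{1\}$ is an ``integral filter'' (citing a Theorem 2.5 that does not actually appear in the preliminaries) to conclude $\neg x=1$ or $\neg\neg x=1$ and then eliminates the first case; your route is shorter and avoids that dangling reference. In the backward direction the two arguments genuinely diverge: the paper uses the fact that $L_0$ is a (proper) filter, hence closed under $\odot$, so $x\odot y=0\notin L_0$ forces $x\notin L_0$ or $y\notin L_0$, i.e.\ $x=0$ or $y=0$; you instead use residuation to obtain $y\leq x\rightarrow 0=\neg x=0$ whenever $x\neq 0$. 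Both work; yours is more elementary and self-contained, while the paper's highlights the structural point that $L_0$ being a proper filter is what drives the implication. (Both your proof and the paper's tacitly assume $L$ is nontrivial, so that $0\rightarrow 0=1\neq 0$ and hence $0\notin L_0$; this is the standard convention.)
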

\begin{proof}$(1)\Rightarrow(2)$ Let $L$ be an integral MTL-algebra. Since $0\rightarrow 0=1$ and so $0\notin L_0$. Hence $L_0\subseteq L\backslash\{0\}$. Now, if $x\in L\backslash\{0\}$, from Proposition 2.2(12), we have $x\odot\neg x=0$ and so $\neg(x\odot \neg x)=1$. By Theorem 2.5, we have that $\{1\}$ is an integral filter, then $\neg x=1$ or $\neg\neg x=1$. If $\neg x=1$, the $x\odot \neg x=x\odot 1=x$ and so $x=0$, that is a contradiction. Hence, $\neg\neg x=1$, by Proposition 2.2(10), we have $\neg x=\neg\neg\neg x=0$ and hence $x\in L_0$. Therefore, $L_0=L\backslash\{0\}$.

$(2)\Rightarrow(1)$ Let $x\odot y=0$, for any $x,y\in L$. Then $(x\odot y)\notin L_0=L\backslash\{0\}$. Since $L_0$ is a proper filter, then $x\notin L_0$ or $y\notin L_0$. Therefore, $x=0$ or $y=0$ and so $L$ is an integral MTL-algebra.
\end{proof}

Theorem 3.10(3.11) suggests a method of how to check an MTL-algebra is an IMTL-algebra (integral MTL-algebra). In what follows, we give some examples to show the applications of Theorem 3.10(3.11).

\begin{example}\emph{Let $L=\{0,a,b,c,d,1\}$, where $0\leq a\leq b\leq c\leq d\leq e\leq 1$. Define operations $\odot$ and $\rightarrow$ as follows}:\\
\begin{center}
\begin{tabular}{c|c c c c c c }
   $\odot$ & $0$ & $a$ & $b$ & $c$ & $d$ & $1$\\
   \hline
   $0$ & $0$ & $0$ & $0$ & $0$ & $0$   & $0$ \\
   $a$ & $0$ & $0$ & $0$ & $0$ & $0$   & $a$ \\
   $b$ & $0$ & $0$ & $0$ & $0$ & $b$   & $b$ \\
   $c$ & $0$ & $0$ & $0$ & $a$ & $b$   & $c$\\
   $d$ & $0$ & $0$ & $b$ & $b$ & $d$   & $d$ \\
   $1$ & $0$ & $a$ & $b$ & $c$ & $d$   & $1$
 \end{tabular} {\qquad}
\begin{tabular}{c|c c c c c c }
   $\rightarrow$ & $0$ & $a$ & $b$ & $c$ & $d$  & $1$\\
   \hline
   $0$ & $1$ & $1$ & $1$ & $1$ & $1$   & $1$ \\
   $a$ & $d$ & $1$ & $1$ & $1$ & $1$   & $1$ \\
   $b$ & $c$ & $c$ & $1$ & $1$ & $1$   & $1$ \\
   $c$ & $b$ & $c$ & $d$ & $1$ & $1$   & $1$\\
   $d$ & $a$ & $a$ & $c$ & $c$ & $1$   & $1$ \\
   $1$ & $0$ & $a$ & $b$ & $c$ & $d$   & $1$
 \end{tabular}
\end{center}

\emph{Then $(L,\wedge,\vee,\odot,\rightarrow,0,1)$ is an MTL-algebra. One can check that $L_0=S_0=\{1\}=^\bot \{0\}$, by Theorem 3.10, we know that $(L,\wedge,\vee,\odot,\rightarrow,0,1)$ is an IMTL-algebra.}
\end{example}

\begin{example}\emph{Let $L=\{0,a,b,c,1\}$ with $0\leq c\leq a,b\leq 1$. Consider the operations $\odot$ and $\rightarrow$ given by the following tables}:\\
\begin{center}
\begin{tabular}{c|ccccc}

  $\odot$ & $0$ & $a$ & $b$ & $c$ & $1$ \\
   \hline
  $0$ & $0$ & $0$ & $0$ & $0$ & $0$ \\
  $c$ & $0$ & $c$ & $c$ & $c$ & $c$ \\
  $a$ & $0$ & $c$ & $a$ & $c$ & $a$ \\
  $b$ & $0$ & $c$ & $c$ & $b$ & $b$ \\
  $1$ & $0$ & $a$ & $b$ & $c$ & $1$ \\
\end{tabular}{\qquad}
\begin{tabular}{c|ccccc}

  $\rightarrow$ & $0$ & $a$ & $b$ & $c$ & $1$ \\
   \hline
  $0$ & $1$ & $1$ & $1$ & $1$ & $1$ \\
  $c$ & $0$ & $1$ & $1$ & $1$ & $1$ \\
  $a$ & $0$ & $b$ & $1$ & $b$ & $1$ \\
  $b$ & $0$ & $a$ & $a$ & $1$ & $1$ \\
  $1$ & $0$ & $a$ & $b$ & $c$ & $1$ \\
\end{tabular}
\end{center}

\emph{Then $(L,\wedge,\vee,\odot,\rightarrow,0,1)$ is an MTL-algebra. One can check that $L_0=\{c,a,b,1\}\\=L\backslash \{0\}$, by Theorem 3.11, we know that $(L,\wedge,\vee,\odot,\rightarrow,0,1)$ is an integral MTL-algebra.}
\end{example}

 The following theorem shows that the left and right implicative stabilizers are equivalent in any MV-algebras.

\begin{theorem}\emph{Let $L$ be an MTL-algebra. If $L$ is an MV-algebra, then $X_l=X_r=X_s=^\bot X$ for any nonempty subset $X$ of $L$.}
\end{theorem}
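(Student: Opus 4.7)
The plan is to reduce the claim to two earlier results. By Theorem 3.9, we already know that ${}^\bot X = X_r \cap X_l = X_s$ holds in any MTL-algebra, so the only new content is to show $X_l = X_r$ (equivalently $X_l = X_r = X_s$) in the MV-algebra setting. Moreover, Theorem 3.6 tells us that the global identity $X_l = X_r = X_s$ for every nonempty $X$ is equivalent to the pointwise symmetry property
\[
a \rightarrow b = b \iff b \rightarrow a = a \quad \text{for all } a,b \in L.
\]
So the whole proof collapses to verifying this symmetry in any MV-algebra.

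To verify it, I would use the defining MV-identity $(a\rightarrow b)\rightarrow b = (b\rightarrow a)\rightarrow a$ from Definition 2.3(2). Suppose $a\rightarrow b = b$. Substituting into the left side gives $(a\rightarrow b)\rightarrow b = b\rightarrow b = 1$, hence $(b\rightarrow a)\rightarrow a = 1$, which by Proposition 2.2(1) means $b\rightarrow a \leq a$. Combined with the general inequality $a \leq b\rightarrow a$ from Proposition 2.2(9), we obtain $b\rightarrow a = a$. The reverse implication is symmetric (swap the roles of $a$ and $b$ and use the MV-identity in the other direction).

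With the symmetry established, Theorem 3.6 yields $X_l = X_r = X_s$ for every nonempty subset $X$, and then Theorem 3.9 gives the remaining equality with ${}^\bot X$. I do not anticipate any real obstacle: the argument is essentially a one-line application of the MV-axiom together with $a\leq b\rightarrow a$, and the rest is packaged by Theorems 3.6 and 3.9. The only thing to be careful about is invoking the correct direction of Theorem 3.6, namely that condition (2) implies condition (5), which is exactly what the reduction needs.
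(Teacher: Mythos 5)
Your proof is correct and follows essentially the same route as the paper's: the paper likewise derives $x\rightarrow a=a$ from $a\rightarrow x=x$ via the MV-identity $(a\rightarrow x)\rightarrow x=(x\rightarrow a)\rightarrow a$ together with $a\leq x\rightarrow a$, and then appeals to the theorem giving ${}^\bot X=X_r\cap X_l=X_s$. The only cosmetic difference is that you package the symmetry $a\rightarrow b=b\Leftrightarrow b\rightarrow a=a$ through Theorem 3.6 (condition (2) implies condition (5)), whereas the paper verifies $X_l\subseteq X_r$ and $X_r\subseteq X_l$ directly for the given subset $X$.
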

\begin{proof} We note that an MV-algebra satisfies $(x\rightarrow y)\rightarrow y=(y\rightarrow x)\rightarrow x$ for all $x,y\in L$. Now, we will prove $X_l=X_r$. Let $a\in X_l$, then $a\rightarrow x=x$ for all $x\in X$ and hence $(a\rightarrow x)\rightarrow x=1$. On one hand, since $L$ is an MV-algebra and so $(x\rightarrow a)\rightarrow a=1$. On the other hand, from Proposition 2.2(9), we have $a\leq x\rightarrow a$. Combining them, one can obtain that $x\rightarrow a=a$. Thus, $a\in X_l$. Hence $X_l\subseteq X_r$. In the similar way, we have $X_r\subseteq X_l$. Therefore, $X_l=X_r$. From Theorem 3.10, we can see that $X_l=X_r=X_s=^\bot X$.
\end{proof}

From the above theorems, we give some characterizations of MV-algebra via implicative stabilizers.

\begin{theorem}\emph{Let $L$ be a BL-algebra. Then the following assertions are equivalent:}
\begin{enumerate}[(1)]
  \item \emph{$L$ is an MV-algebra,
  \item $X_l=X_r=X_s=^\bot X$,for any nonempty subset $X$ of $L$,
  \item $L_0=S_0=^\bot \{0\}$,
  \item $X_l=\langle X\rangle_l$, for any nonempty subset $X$ of $L$,
  \item $a\rightarrow b=b$ if and only if $b\rightarrow a=a$, for any $a,b\in L$,
  \item $X_r$ is a filter of $L$, for any nonempty subset $X$ of $L$,
  \item $S_x=R_x=L_x$, for all $x\in L$.}
\end{enumerate}
\end{theorem}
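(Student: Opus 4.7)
The plan is to close a single cycle $(1)\Rightarrow(2)\Rightarrow(3)\Rightarrow(1)$ among the first three conditions, then attach the remaining conditions $(4),(5),(6),(7)$ to this cycle by invoking Theorem 3.6 together with two short bridge implications $(1)\Rightarrow(5)$ and $(5)\Rightarrow(3)$.

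First, $(1)\Rightarrow(2)$ is exactly the statement of Theorem 3.14. For $(2)\Rightarrow(3)$, I would specialize (2) to the singleton $X=\{0\}$: then $X_l=L_0$, $X_s=S_0$, and ${}^\bot X={}^\bot\{0\}$, so the equality chain in (2) reduces to condition (3). For $(3)\Rightarrow(1)$, note that $S_0=\{1\}$ always, since $S_0=L_0\cap R_0\subseteq R_0=\{1\}$ by Proposition 3.4(6) while $1\in S_0$ trivially; hence (3) forces $L_0=\{1\}$, and Theorem 3.10 then makes $L$ into an IMTL-algebra. Since $L$ is already assumed to be a BL-algebra, the classical result that a BL-algebra with $\neg\neg x=x$ is an MV-algebra delivers (1).

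For the bridge, $(1)\Rightarrow(5)$ also follows from Theorem 3.14: specializing $X_l=X_r$ to $X=\{b\}$ gives $L_b=R_b$, which is precisely the biconditional $a\rightarrow b=b\Leftrightarrow b\rightarrow a=a$. Conversely, for $(5)\Rightarrow(3)$: if $a\in L_0$, i.e.\ $a\rightarrow 0=0$, then (5) with $b=0$ yields $0\rightarrow a=a$; but $0\rightarrow a=1$, so $a=1$, and therefore $L_0=\{1\}$, whence $L_0=S_0={}^\bot\{0\}$ holds automatically. Combined with the equivalences $(4)\Leftrightarrow(5)\Leftrightarrow(6)\Leftrightarrow(7)$ already furnished by Theorem 3.6 (together with Proposition 3.4(1) to move between singletons and arbitrary nonempty subsets), the cycle is complete.

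The main obstacle is $(3)\Rightarrow(1)$: this is the only step where the BL-algebra hypothesis is genuinely used, since an arbitrary IMTL-algebra need not be an MV-algebra. I would either cite the standard result of H\'{a}jek that involution together with divisibility forces the Wajsberg identity $(x\rightarrow y)\rightarrow y=(y\rightarrow x)\rightarrow x$, or give a short derivation based on the BL identity $(x\rightarrow y)\rightarrow y=x\vee y$ (obtainable from divisibility by a subdirect decomposition into linearly ordered BL-chains) combined with $\neg\neg x=x$. The remaining steps are essentially bookkeeping on Theorems 3.6, 3.9, 3.10, and 3.14.
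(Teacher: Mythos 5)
Your proposal is correct and, on the parts the paper actually proves, follows the same route: the cycle $(1)\Rightarrow(2)\Rightarrow(3)\Rightarrow(1)$ via the theorem that MV-algebras satisfy $X_l=X_r=X_s={}^\bot X$, the specialization $X=\{0\}$, and the IMTL characterization of $L_0=S_0={}^\bot\{0\}$ combined with the classical fact that a BL-algebra with $\neg\neg x=x$ is an MV-algebra; plus the cycle $(4)\Leftrightarrow(5)\Leftrightarrow(6)\Leftrightarrow(7)$ imported wholesale from Theorem 3.6. The essential difference is that the paper's proof stops there, leaving the two clusters $\{1,2,3\}$ and $\{4,5,6,7\}$ unconnected, so as written it never establishes, say, $(1)\Leftrightarrow(5)$. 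Your bridge implications $(1)\Rightarrow(5)$ (specializing $X_l=X_r$ to singletons to get $L_b=R_b$) and $(5)\Rightarrow(3)$ (taking $b=0$: $a\rightarrow 0=0$ forces $0\rightarrow a=a$, hence $a=1$ and $L_0=\{1\}$) are exactly what is needed to close that gap, and both check out; your argument is therefore more complete than the paper's own. Two remarks. First, you are right that $(3)\Rightarrow(1)$ is the one place the BL hypothesis is indispensable; the paper merely cites its IMTL characterization (with a misnumbered reference) and silently uses that divisibility plus involution yields MV, which you make explicit. Second, your parenthetical alternative for that step is flawed: $(x\rightarrow y)\rightarrow y=x\vee y$ is not a BL identity (in the G\"odel chain $[0,1]$ take $x=1/2$, $y=1/3$; then $(x\rightarrow y)\rightarrow y=1\neq 1/2=x\vee y$), and within BL it is equivalent to the MV identity, so obtaining it ``from divisibility by subdirect decomposition'' would be circular. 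Keep your primary route of citing H\'ajek's result that divisibility together with $\neg\neg x=x$ forces $(x\rightarrow y)\rightarrow y=(y\rightarrow x)\rightarrow x$.
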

\begin{proof} $(1)\Rightarrow (2)$ It follows from Theorem 3.15.

$(2)\Rightarrow(3)$ Taking $X=\{0\}$ in (2).

$(3)\Rightarrow(1)$ It follows from Theorem 3.11.

$(4)\Rightarrow (5)$, $(5)\Rightarrow (6)$, $(6)\Rightarrow (7)$, $(7)\Rightarrow (4)$ follow from Theorem 3.6.
\end{proof}

\begin{proposition}\emph{Let $L$ be a BL-algebra and $F$ be any filter of $L$. If $F=(\langle F\rangle_r)_r$, then $L$ is an MV-algebra.}
\end{proposition}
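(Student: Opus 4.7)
The plan is to leverage Theorem 3.16, which for a BL-algebra gives the equivalence between ``$L$ is an MV-algebra'' and ``$R_x = L_x$ for every $x \in L$'' (condition (7)). Hence it suffices to establish both inclusions $R_b \subseteq L_b$ and $L_b \subseteq R_b$ for an arbitrary $b \in L$, each time by applying the hypothesis $(\langle F\rangle_r)_r = F$ to a well-chosen filter $F$; note that since $F$ is already a filter we have $\langle F\rangle = F$, and the hypothesis simplifies to $(F_r)_r = F$.

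For $R_b \subseteq L_b$ I would take $F = \langle b\rangle$. By Proposition 3.4(3), $\langle b\rangle_r = R_b$, so the hypothesis yields $(R_b)_r = \langle b\rangle$. Since $b$ lies in $\langle b\rangle = (R_b)_r$, every $x \in R_b$ must satisfy $x \rightarrow b = b$, i.e.\ $x \in L_b$. For the reverse inclusion $L_b \subseteq R_b$ I would exploit the filter $F = L_b$ itself, which is a filter by Proposition 3.4(8). The crucial observation is the tautology $b \in (L_b)_r$: by the very definition of $L_b$, every $x \in L_b$ satisfies $x \rightarrow b = b$, which is exactly the condition for $b$ to lie in $(L_b)_r$. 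The hypothesis now reads $((L_b)_r)_r = L_b$, so for any $a \in L_b$ and any $x \in (L_b)_r$ one has $x \rightarrow a = a$; specializing to $x = b$ yields $b \rightarrow a = a$, i.e.\ $a \in R_b$. Combining the two inclusions gives $L_b = R_b$ for every $b \in L$, and Theorem 3.16 finishes the proof.

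The main conceptual step---and what I expect to be the only real obstacle---is spotting the second filter $F = L_b$ and recognizing the tautological membership $b \in (L_b)_r$. The first inclusion falls out almost immediately from the principal filter $\langle b\rangle$, but without the insight of feeding $L_b$ back into the hypothesis one has no obvious handle on the converse direction. Once this trick is seen, the rest is merely an unwinding of the definitions of $L_b$ and $R_b$ together with the given identity.
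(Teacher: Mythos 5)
Your proof is correct and follows the paper's overall strategy: reduce to showing $L_b=R_b$ for every $b$ and then invoke Theorem 3.16 $(7)\Rightarrow(1)$; your first inclusion $R_b\subseteq L_b$ via $F=\langle b\rangle$ and Proposition 3.4(3) is exactly the paper's argument. You diverge only on the reverse inclusion: the paper gets $L_a\subseteq R_a$ for free from the first inclusion by swapping the roles of the two elements (from $x\in L_a$ one has $a\in R_x$, hence $a\in L_x$ by the already-proved inclusion applied at $x$, hence $x\in R_a$), so it only ever needs the hypothesis for principal filters. You instead feed the (generally non-principal) filter $L_b$ back into the hypothesis and use the tautological membership $b\in(L_b)_r$; this is perfectly valid since the hypothesis is assumed for every filter, but it uses strictly more of the hypothesis than the paper's bootstrap does. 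Both arguments are sound; the paper's is marginally more economical, while yours avoids the slightly slippery role-swapping step.
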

\begin{proof} By Proposition 3.4(3) and hypothesis, we have $\langle a\rangle=(\langle a\rangle_r)_r=(R_a)_r$. For any $x\in R_a$, since $a\in\langle a\rangle=(R_a)_r$, we obtain $y\rightarrow a=a$ for any $y\in R_a$. Thus $x\rightarrow a=a$, that is, $x\in L_a$. Therefore, $R_a\subseteq L_a$. On the other hand, for any $x\in L_a$, we have $x\rightarrow a=a$, thus $a\in R_x$ and hence $a\in L_x$, which implies that $a\rightarrow x=x$, that is $x\in R_a$. Based on the above, we get that $L_a=R_a$, for all $a\in L$. From Theorem 3.16 $(7)\Rightarrow(1)$, we get that $L$ is an MV-algebra.
\end{proof}

The following example shows that the converse of the above proposition is not true, in general.

\begin{example}\emph{Let $L=\{0,a,b,c,d,1\}$, where $0\leq b\leq a\leq 1$, $0\leq d\leq a,c\leq 1$. Define operations $\odot$ and $\rightarrow$ as follows}:\\
\begin{center}
\begin{tabular}{c|c c c c c c}
   $\odot$ & $0$ & $a$ & $b$ & $c$ & $d$  & $1$\\
   \hline
   $0$ & $0$ & $0$ & $0$ & $0$ & $0$  & $0$ \\
   $a$ & $0$ & $b$ & $b$ & $d$ & $0$  & $a$ \\
   $b$ & $0$ & $b$ & $b$ & $0$ & $0$  & $b$ \\
   $c$ & $0$ & $d$ & $0$ & $c$ & $d$  & $c$\\
   $d$ & $0$ & $0$ & $0$ & $d$ & $0$  & $d$ \\
   $1$ & $0$ & $a$ & $b$ & $c$ & $d$  & $1$
 \end{tabular} {\qquad}
\begin{tabular}{c|c c c c c c}
   $\rightarrow$ & $0$ & $a$ & $b$ & $c$ & $d$  & $1$\\
   \hline
   $0$ & $1$ & $1$ & $1$ & $1$ & $1$  & $1$ \\
   $a$ & $d$ & $1$ & $a$ & $c$ & $c$  & $1$ \\
   $b$ & $c$ & $1$ & $1$ & $c$ & $c$  & $1$ \\
   $c$ & $b$ & $a$ & $b$ & $1$ & $a$  & $1$\\
   $d$ & $a$ & $1$ & $a$ & $1$ & $1$  & $1$ \\
   $1$ & $0$ & $a$ & $b$ & $c$ & $d$  & $1$
 \end{tabular}
\end{center}

\emph{One can easily check that $(L,\wedge,\vee,\odot,\rightarrow,0,1)$ is an MV-algebra and $F=\{a,1\}$ is a filter of $L$. However, $(F_r)_r=\{a,b,1\}\neq F$.}
\end{example}

\begin{remark}\emph{ It was proved that in \cite{turunen} that RS-BL-algebras, which were defined in \cite{Somayeh}, are equivalent to MV-algebras. Therefore, Example 3.18 gives a negative answer to the another open problem in \cite{Somayeh} that `` Is the converse of the Proposition 3.17 true?".}
\end{remark}

As we known, BL-algebras are a class of residuated structure based on continuous t-norm and its residua, then it satisfies the condition of divisibility $x\wedge y=x\odot (x\rightarrow y)$, in this case, the condition $\neg\neg x=x$ is equivalent to the $(x\rightarrow y)\rightarrow y=(y\rightarrow x)\rightarrow x$ in this algebraic structure.  Using this important result, one can prove that a BL-algebra $L$ is an MV-algebra if and only if $X_l=X_r$ for any nonempty subset $X$ of $L$ from Theorem 3.16. Compared to BL-algebras, MTL-algebras are a more general residuated structure based on left-continuous t-norm and its residua, which not satisfies the condition of divisibility.  Based on the above consideration, we have the {\bf second open problem}:
\begin{enumerate}[(2)]
  \item Let $L$ be an MTL-algebra and $X_l=X_r$ for any nonempty subset $X$ of $L$. Is $L$ an MV-algebra?
\end{enumerate}

\section{Multiplicative stabilizers in MTL-algebras}
In this section, we investigate related properties of multiplicative stabilizers and discuss the relations between implicative stabilizers and  multiplicative stabilizers. Also, we prove that the left and right multiplicative stabilizers form two MTL-algebras. Finally, using multiplicative stabilizers, we give some characterizations of G\"{o}del algebras and linearly order G\"{o}del algebras.

\begin{definition}\emph{Let $L$ be an MTL-algebra and $X$ be a nonempty subset of $L$. The \emph{left and right multiplicative stabilizer} of $X$ are defined as follow, respectively},
\begin{center}$X^\ast_l=\{a\in L|a\odot x=x$, \emph{for all} $x\in X\}$,\\
$X^\ast_r=\{a\in L|x\odot a=a$, \emph{for all} $x\in X\}$.
\end{center}

\emph{The set $X^\ast_s=X^\ast_l\cap X^\ast_r$ is called the \emph{multiplicative stabilizer} of $X$. For convenience, the multiplicative stabilizer, left multiplicative stabilizer and right multiplicative stabilizer of $X=\{x\}$ are denoted by $S^\ast_x$, $L^\ast_x$ and $R^\ast_x$, respectively.}
\end{definition}
\begin{example}\emph{Considering the MTL-algebra in Example 3.2. If $X=\{a,b\}$, then $X^\ast_l=\{1,b\}$, $X^\ast_r=\{0,b\}$ and hence $X^\ast_s=\{b\}$. In this case, $X^\ast_r$ is not a filter of $L$.}
\end{example}

\begin{proposition} \emph{Let $L$ be an MTL-algebra and $X,Y$ be two nonempty subsets of $L$. Then the following assertions hold:}
\begin{enumerate}[(1)]
   \item \emph{$X^\ast_r=\cap_{x\in X}R^\ast_x$, $X^\ast_l=\cap_{x\in X}L^\ast_x$ and $X^\ast_s=\cap_{x\in X}S^\ast_x$,
  \item If $X\subseteq Y$, then $Y^\ast_r\subseteq X^\ast_r$, $Y^\ast_l\subseteq X^\ast_l$, and $Y^\ast_s\subseteq X^\ast_s$,
  \item $\langle X\rangle^\ast_r=X^\ast_r$,
  \item $X=\{0\}$ if and only if $X^\ast_l=L$, $X^\ast_r=X^\ast_s=\{0\}$,
  \item $R^\ast_1=\{1\}$ and $L^\ast_1=\{1\}$ and so $S^\ast_1=\{1\}$,
  \item $X^\ast_s=X$,
  \item $X^\ast_l$ is a filter of $L$,
  \item if $a,b\in X^\ast_r$ ($X^\ast_l$), then $a\vee b,a\odot b\in X^\ast_r$($X^\ast_l$),
  \item $X_r\cap X^\ast_r\subseteq \{a|x\odot(x\rightarrow a)=a\odot x=a$ for all $x\in X\}$, $X_l\cap X^\ast_l\subseteq \{a|a\odot(a\rightarrow x)=a\odot x=x$ for all $x\in X\}$,
  \item if $L$ is a BL-algebra, then $X_r\cap X^\ast_r\subseteq \{a|x\wedge a=a$, for all $x\in X\}=\{a|a\in (x]$, for all $x\in X\}$, $X_l\cap X^\ast_l\subseteq \{a|a\wedge x=x$ for all $x\in X\}=\{a|a\in [x)$, for all $x\in X\}$,
  \item if $X\subseteq G(L)$, then $X_r\cap X^\ast_r\subseteq G(X)$,  $X_l\cap X^\ast_l\subseteq G(X)$ and hence $X_s\cap X^\ast_s\subseteq G(X)$.}
\end{enumerate}
\end{proposition}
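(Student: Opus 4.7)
The plan is to dispatch the eleven items by grouping them according to the technique required. Items (1), (2), (4), and (5) are immediate unpackings of the definitions: for example, $a\in X^\ast_r$ precisely when $x\odot a=a$ for every $x\in X$, which is exactly membership in $\bigcap_{x\in X}R^\ast_x$, and (2) is just a restriction of a universal quantifier. Item (5) uses that $a\odot 1=a$, and item (6) is a one-line application of commutativity of $\odot$: if $a\in X^\ast_s$ then $a\odot x=x$ and $x\odot a=a$, and commutativity forces $a=x$ for every $x\in X$.

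For (3), the inclusion $\langle X\rangle^\ast_r\subseteq X^\ast_r$ follows from (2). For the reverse, I would take $a\in X^\ast_r$, represent any $y\in\langle X\rangle$ by $y\geq x_1\odot\cdots\odot x_n$ with $x_i\in X$, and then show $y\odot a\geq a$ by iteratively applying $x_i\odot a=a$ together with monotonicity and associativity of $\odot$; combined with $y\odot a\leq a$ (from $y\leq 1$) this gives equality. Items (7) and (8) reduce to the filter axioms: closure of $X^\ast_l$ under $\odot$ comes from associativity, and upward closure from the squeeze $x=a\odot x\leq b\odot x\leq x$ whenever $a\leq b$ and $a\in X^\ast_l$; for (8) I would combine these monotonicity arguments with the distributive laws of Proposition 2.2 to handle $\vee$ and $\odot$ inside the stabilizers.

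The main obstacle lies in (9)--(11), where the implicative and multiplicative stabilizers interact. For (9), starting from $a\in X_r\cap X^\ast_r$ I have both $x\rightarrow a=a$ and $x\odot a=a$, so $x\odot(x\rightarrow a)=x\odot a=a$, and the dual argument settles the $X_l\cap X^\ast_l$ half. Item (10) is then a one-step translation in a BL-algebra using divisibility $x\wedge a=x\odot(x\rightarrow a)$, which converts the conclusion of (9) into $x\wedge a=a$, equivalently $a\leq x$, i.e., $a\in(x]$; the symmetric computation handles the left version and yields $a\in[x)$. The most delicate step will be (11): when $X\subseteq G(L)$, each $x\in X$ is idempotent, and I plan to apply Proposition 2.4(2) to the identities $x\rightarrow a=a$ and $x\odot a=a$ in order to transfer idempotency from $x$ to $a$ and deduce $a\odot a=a$, placing $a$ in $G(X)$. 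The hard part will be ensuring that Proposition 2.4(2) cleanly yields $a\odot a=a$ rather than a weaker byproduct; the remaining items are essentially residuation arithmetic.
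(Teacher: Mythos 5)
Most of your proposal is sound and, on the two items the paper actually writes out, it coincides with the paper's argument: your treatment of (3) (represent $y\in\langle X\rangle$ by $y\geq x_1\odot\cdots\odot x_n$ and squeeze $a\leq a\odot y\leq a$) and of (7)--(8) (associativity for $\odot$-closure, the squeeze $x=a\odot x\leq b\odot x\leq x$ for upward closure, distributivity of $\odot$ over $\vee$) is exactly what the paper does. Items (9) and (10) are also handled correctly: the one-line computation $x\odot(x\rightarrow a)=x\odot a=a$ and its translation through divisibility are all that is needed.

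There are, however, three genuine gaps, all in items whose proofs the paper omits. First, in (5) your own justification refutes the stated claim: since $1\odot a=a$ for every $a$ (monoid identity), $R^\ast_1=\{a\mid 1\odot a=a\}=L$, not $\{1\}$; only $L^\ast_1=\{1\}$ and hence $S^\ast_1=\{1\}$ survive. Second, in (6) your commutativity argument proves only the inclusion $X^\ast_s\subseteq X$ (any $a\in X^\ast_s$ equals every $x\in X$); the reverse inclusion $X\subseteq X^\ast_s$ is never addressed, and it fails: $x\in S^\ast_x$ forces $x\odot x=x$, so already for the non-idempotent element $a$ of Example 3.2 one has $S^\ast_a=\emptyset\neq\{a\}$, and for any $X$ with two distinct elements $X^\ast_s=\emptyset$. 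Third, the plan for (11) cannot be carried out, because the inclusion is false as stated: take $X=\{1\}\subseteq G(L)$; then $X_r=\{a\mid 1\rightarrow a=a\}=L$ and $X^\ast_r=\{a\mid 1\odot a=a\}=L$, so $X_r\cap X^\ast_r=L$, which is contained in the set of idempotents only when $L$ is a G\"{o}del algebra (again Example 3.2, where $a\odot a=0\neq a$, is a counterexample). Consequently no application of Proposition 2.4(2) can ``transfer idempotency from $x$ to $a$''; the hypotheses $x\rightarrow a=a$ and $x\odot a=a$ with $x$ idempotent simply do not imply $a\odot a=a$. You should flag (5), (6) and (11) as stated incorrectly rather than attempt to prove them.
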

\begin{proof} The proofs of (1),(2),(4),(5),(6) and (7) are similar to that of Proposition 3.4.

(3) Since $X\subseteq \langle X\rangle$, by (2), we have $\langle X\rangle^\ast_r\subseteq X^\ast_r$. On the other hand, suppose that $a\in X^\ast_r$, so $a\odot x=a$, for all $x\in X$. For any $y\in \langle X\rangle$, there exist $y_1,y_2,\cdots y_n\in X$ such that $y_1\odot y_2\cdots \odot y_n\leq y$ and hence $a\odot y\geq a\odot y_1\odot y_2\cdots \odot y_n=a$. Moreover, we have $a\odot y\leq a$, for all $y\in \langle X\rangle$, and so $a\in \langle X\rangle^\ast_r$. Therefore, $\langle X\rangle^\ast_r=X^\ast_r$.

(8) For all $a,b\in X^\ast_r$, we have $a=x\odot a$ and $b=x\odot b$. It follows that $a\odot b=(x\odot a)\odot(x\odot b)\leq x\odot(a\odot b)\leq a\odot b$, which implies that $a\odot b=x\odot(a\odot b)$. Thus, $a\odot b\in X^\ast_r$. On the other hand, we have $a\vee b=(a\odot x)\vee (b\odot x)=x\odot(a\vee b)$ for all $a,b\in X^\ast_r$. Hence we can obtain $x\odot(a\vee b)=a\vee b$, that is, $a\vee b\in X^\ast_r$. One can also check that $X^\ast_l$ is closed under the operations $\odot$ and $\vee$.

The proofs of (9),(10),(11) are easy and hence we omit them.
\end{proof}

The following example shows that $X^\ast_l=\langle X\rangle^\ast_l$ not holds, in general.

\begin{example}\emph{Let $L=\{0,a,b,1\}$ with $0\leq a\leq b\leq 1$. Consider the operations $\odot$ and $\rightarrow$ given by the following tables}:
\begin{center}
\begin{tabular}{c|cccc}

  $\odot$ & $0$ & $a$ & $b$ & $1$ \\
   \hline
  $0$ & $0$ & $0$ & $0$ & $0$ \\
  $a$ & $0$ & $0$ & $a$ & $a$ \\
  $b$ & $0$ & $a$ & $b$ & $b$ \\
  $1$ & $0$ & $a$ & $b$ & $1$ \\
\end{tabular}{\qquad}
\begin{tabular}{c|cccc}

  $\rightarrow$ & $0$ & $a$ & $b$ & $1$ \\
   \hline
  $0$ & $1$ & $1$ & $1$ & $1$ \\
  $a$ & $a$ & $1$ & $1$ & $1$ \\
  $b$ & $0$ & $a$ & $1$ & $1$ \\
  $1$ & $0$ & $a$ & $b$ & $1$ \\
\end{tabular}
\end{center}

\emph{Then $(L,\wedge,\vee,\odot,\rightarrow,0,1)$ is an MTL-algebra. If $X=\{b\}$, then $X^\ast_l=\{b,1\}$ and $\langle X \rangle^\ast_l=\{1\}$ and hence $X^\ast_l\neq \langle X\rangle^\ast_l$.}
\end{example}

From Proposition 4.3(8), we know that  $X^\ast_r$ is closed under $\vee$, in general, the following example shows that $X^\ast_r$ is not a lattice ideal of $L$.

\begin{example}\emph{Let $L=\{0,a,b,c,1\}$ with $0\leq a\leq b\leq c\leq 1$. Consider the operation $\odot$ and $\rightarrow$ given by the following tables}:\\
\begin{center}
\begin{tabular}{c|ccccc}

  $\odot$ & $0$ & $a$ & $b$ & $c$ & $1$ \\
   \hline
  $0$ & $0$ & $0$ & $0$ & $0$ &$0$ \\
  $a$ & $0$ & $a$ & $a$ & $a$ &$a$ \\
  $b$ & $0$ & $a$ & $a$ & $b$ &$b$ \\
  $c$ & $0$ & $a$ & $a$ & $c$ &$c$ \\
  $1$ & $0$ & $a$ & $b$ & $c$ & $1$ \\
\end{tabular}{\qquad}
\begin{tabular}{c|ccccc}

  $\rightarrow$ & $0$ & $a$ & $b$ & $c$ & $1$ \\
   \hline
  $0$ & $1$ & $1$ & $1$ & $1$ & $1$ \\
  $a$ & $a$ & $1$ & $1$ & $1$ & $1$ \\
  $b$ & $0$ & $c$ & $1$ & $1$ & $1$ \\
  $c$ & $0$ & $b$ & $b$ & $1$ & $1$ \\
  $1$ & $0$ & $a$ & $b$ & $c$ & $1$ \\
\end{tabular}
\end{center}

\emph{Then $(L,\wedge,\vee,\odot,\rightarrow,0,1)$ is an MTL-algebra, which does not satisfy the divisibility condition $x\wedge y=x\odot(x\rightarrow y)$, for all $x,y,z\in L$. Let $X=\{a,c\}$, one can check that $X^\ast_r=\{0,a,c\}$ is not a lattice ideal of $L$, since $c\in X^\ast_r$ and $b\leq c$, but $b\notin X^\ast_r$.}
\end{example}

Then following proposition shows that if $L$ is a BL-algebra, we can obtain that $X^\ast_r$ is a lattice ideal of $L$.

\begin{proposition}\emph{Let $L$ be a BL-algebra. Then $X^\ast_r$ is a lattice ideal of $L$.}
\end{proposition}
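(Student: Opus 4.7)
The plan is to verify the two defining conditions of a lattice ideal from the paper's definition: closure under finite joins, and downward closure under $\leq$.

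For closure under $\vee$, I would simply invoke Proposition 4.3(8), which has already established that $a,b\in X^\ast_r$ implies $a\vee b\in X^\ast_r$. (The set is also nonempty since $x\odot 0=0$ for every $x$, hence $0\in X^\ast_r$.) So the real content is the downward closure property.

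For downward closure, suppose $a\in X^\ast_r$ and $b\in L$ with $b\leq a$; I must show $b\in X^\ast_r$, i.e. $x\odot b=b$ for every $x\in X$. The key step, and the one place where the BL-algebra hypothesis is crucial, is to use the divisibility identity $x\wedge y=x\odot(x\rightarrow y)$ (from Definition 2.3(1)) to rewrite $b$ as a product involving $a$: since $b\leq a$ gives $a\wedge b=b$, we obtain
\[
b=a\wedge b=a\odot(a\rightarrow b).
\]
Now fix $x\in X$. Using commutativity and associativity of $\odot$, together with the hypothesis $x\odot a=a$,
\[
x\odot b=x\odot\bigl(a\odot(a\rightarrow b)\bigr)=(x\odot a)\odot(a\rightarrow b)=a\odot(a\rightarrow b)=b,
\]
which shows $b\in X^\ast_r$ and completes the proof.

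The only delicate point is that the argument genuinely requires divisibility: in a general MTL-algebra one cannot decompose $b$ as $a\odot(a\rightarrow b)$, and indeed Example 4.5 already exhibits an MTL-algebra in which $X^\ast_r$ fails to be downward closed. So the proposition really pinpoints divisibility as the ingredient that turns the closure properties in Proposition 4.3(8) into a bona fide lattice ideal structure.
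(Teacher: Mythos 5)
Your proof is correct and follows essentially the same route as the paper: closure under $\vee$ via Proposition 4.3(8), and downward closure via the divisibility decomposition $b=a\wedge b=a\odot(a\rightarrow b)$ followed by the computation $x\odot b=(x\odot a)\odot(a\rightarrow b)=a\odot(a\rightarrow b)=b$. The added remarks on nonemptiness and on Example 4.5 pinpointing the role of divisibility are fine but do not change the argument.
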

\begin{proof} From Proposition 4.3(8), we conclude that $X^\ast_r$  is closed under $\vee$, that is, $a\vee b\in X^\ast_r$, for all $a,b\in X^\ast_r$. Moreover, for all $a\in X^\ast_r$ and $b\in L$, let $b\leq a$, we will show $b\in X^\ast_r$. It follows from $b\leq a$ that $b=b\wedge a$. Since $x\odot b=x\odot(a\wedge b)=x\odot(a\odot(a\rightarrow b))=(x\odot a)\odot(a\rightarrow b)=a\odot(a\rightarrow b)=a\wedge b=b$. Thus, $b\in X^\ast_r$.  Therefore, $X^\ast_r$ is a lattice ideal of $L$.
\end{proof}

The following theorems show that the $R^\ast_x$ and $L^\ast_x$ have the same structure as an MTL-algebra under which suitable conditions, which reveals the essence of the stabilizer in MTL-algebras.

\begin{theorem}\emph{Let $L$ be an MTL-algebra and $x$ be an idempotent element of $L$. Then $(L^\ast_x, \odot,\wedge,\vee,\rightarrow,x,1)$ is an MTL-algebra.}
\end{theorem}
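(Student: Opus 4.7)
The plan is to show that $L^\ast_x$, viewed as a subset of $L$ with the inherited operations, is closed under $\wedge$, $\vee$, $\odot$, and $\rightarrow$, and then to transfer the MTL-algebra axioms from $L$. Observe first that if $a\in L^\ast_x$, then $x=a\odot x\le a$, so $L^\ast_x\subseteq[x,1]$; moreover $x\in L^\ast_x$ since $x$ is idempotent, and $1\in L^\ast_x$ trivially. Hence $x$ and $1$ are the least and greatest elements of $L^\ast_x$.

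Closure under $\odot$ and $\vee$ is immediate from Proposition~4.3(8). For closure under $\wedge$, take $a,b\in L^\ast_x$. Then $(a\wedge b)\odot x\le a\odot x=x$, and since $a\wedge b\ge x$ we also have $(a\wedge b)\odot x\ge x\odot x=x$ by idempotency of $x$. Thus $(a\wedge b)\odot x=x$, i.e.\ $a\wedge b\in L^\ast_x$.

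The main step is closure under $\rightarrow$, which I expect to be the only non-routine point. Since $x$ is idempotent we have $x\in G(L)$, so Proposition~2.4(2) applies. For any $a,b\in L^\ast_x$,
\begin{equation*}
x\odot(a\rightarrow b)=x\odot\bigl((x\odot a)\rightarrow(x\odot b)\bigr)=x\odot(x\rightarrow x)=x\odot 1=x,
\end{equation*}
which gives $a\rightarrow b\in L^\ast_x$. Hence $\wedge,\vee,\odot,\rightarrow$ restrict to operations on $L^\ast_x$.

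It remains to verify the MTL-algebra axioms. From the closure above, $(L^\ast_x,\wedge,\vee,x,1)$ is a sublattice of $L$ with bounds $x$ and $1$, hence a bounded lattice; $(L^\ast_x,\odot,1)$ is a commutative monoid because commutativity, associativity, and the identity law for $1$ are inherited from $L$, and $1\in L^\ast_x$. The residuation $a\odot b\le c\iff a\le b\rightarrow c$ for $a,b,c\in L^\ast_x$ and the prelinearity identity $(a\rightarrow b)\vee(b\rightarrow a)=1$ are inherited directly from $L$, because the order and the operations on $L^\ast_x$ agree with those of $L$ and $L^\ast_x$ is closed under $\rightarrow$ and $\vee$. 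This completes the verification that $(L^\ast_x,\odot,\wedge,\vee,\rightarrow,x,1)$ is an MTL-algebra.
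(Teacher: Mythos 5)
Your proof is correct and follows essentially the same route as the paper's: closure under $\vee$ and $\odot$ via Proposition~4.3(8), closure under $\rightarrow$ via Proposition~2.4(2), identification of $x$ and $1$ as the bounds, and inheritance of the remaining axioms from $L$. The only divergence is in the closure under $\wedge$, where your squeeze argument $x=x\odot x\le(a\wedge b)\odot x\le a\odot x=x$ is actually cleaner than the paper's computation, which implicitly invokes the identity $x\odot c=x\wedge c$ for idempotent $x$ without justification.
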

\begin{proof} First, we show that $(L^\ast_x,\wedge,\vee,x,1)$ is a bounded lattice with $x$ as the smallest element
and $1$ as the greatest element. From Proposition 4.3(8), we have that $L^\ast_x$ is closed under $\vee$. On the other hand, we have $x\odot(a\wedge b)=x\odot (a\wedge b)=x\wedge(a\wedge b)=(x\wedge a)\wedge(x\wedge b)=(x\odot a)\wedge(x\odot b)=x\wedge x=x$, that is, $a\wedge b\in L^\ast_x$.
Therefore $(L^\ast_x,\wedge,\vee)$ is a lattice. For all $a\in L^\ast_x$, one can easily check that $a\wedge x=a\odot x=x$ and $a\vee 1=1$. Thus, $x$ is the smallest element and $1$ is the greatest element in $L^\ast_x$, respectively.

Next, we prove that $(L^\ast_x,\odot,1)$ is a commutative monoid with $1$ as neutral
element. From Proposition 4.3(8), we have $L^\ast_x$ is closed under $\odot$.  It follows that $(L^\ast_x,\odot)$ is a commutative semigroup. For all $a\in L^\ast_x$, we can obtain that
$a\odot 1=a$, that is, $1$ is a unital element.

From Proposition 2.4(2), for all $a,b\in L^\ast_x$, we have $x\odot(a\rightarrow b)=x\odot[(x\odot a)\rightarrow (x\odot b)]=x\odot(x\rightarrow x)=x$, that is, $a\rightarrow b\in L^\ast_x$.

Therefore, we obtain that $(L^\ast_x, \odot,\wedge,\vee,\rightarrow,x,1)$ is an MTL-algebra.
\end{proof}

\begin{theorem}\emph{Let $L$ be an MTL-algebra and $x$ be an idempotent element of $L$. Then $(R^\ast_x, \odot,\wedge,\vee,\rightsquigarrow,0,x)$ is an MTL-algebra, where $a\rightsquigarrow b=x\odot(a\rightarrow b)$, for all $a,b\in L$.}
\end{theorem}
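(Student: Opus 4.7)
The plan is to verify the four axioms of Definition~2.1 for the structure $(R^\ast_x, \odot, \wedge, \vee, \rightsquigarrow, 0, x)$. The easy preliminaries are: $0 \in R^\ast_x$ since $x \odot 0 = 0$, and $x \in R^\ast_x$ by idempotency. For any $a \in R^\ast_x$, the identity $a = x \odot a$ forces $a \le x$, so $x$ is the greatest element and $0$ is the least in $R^\ast_x$. Closure under $\odot$ and $\vee$ is recorded in Proposition~4.3(8), and closure under the new implication $\rightsquigarrow$ follows from $x \odot [x \odot (b \rightarrow c)] = (x \odot x) \odot (b \rightarrow c) = x \odot (b \rightarrow c)$, so $b \rightsquigarrow c \in R^\ast_x$.

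The subtle point, and what I expect to be the main obstacle, is closure of $R^\ast_x$ under $\wedge$. Given $a, b \in R^\ast_x$ the goal is $x \odot (a \wedge b) = a \wedge b$; only the inequality $x \odot (a \wedge b) \le a \wedge b$ is automatic. I would establish the reverse direction by first proving the auxiliary identity $(x \odot a) \wedge (x \odot b) = x \odot (a \wedge b)$ for idempotent $x$ and then substituting $a = x \odot a$ and $b = x \odot b$. To prove the auxiliary identity I would invoke the subdirect representation of MTL-algebras as products of MTL-chains (Jenei--Montagna): in a chain the identity reduces to $\min(x \odot a, x \odot b) = x \odot \min(a, b)$, which is immediate from the monotonicity of $\odot$. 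Transferring coordinate-wise through the chain quotients then yields the identity in $L$, from which $a \wedge b = (x \odot a) \wedge (x \odot b) = x \odot (a \wedge b)$, placing $a \wedge b$ in $R^\ast_x$.

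The remaining axioms are direct. For the commutative monoid $(R^\ast_x, \odot, x)$, the element $x$ acts as the identity since $a \odot x = x \odot a = a$ by definition for $a \in R^\ast_x$, with commutativity and associativity inherited from $L$. For residuation, I would prove $a \odot b \le c$ if and only if $a \le b \rightsquigarrow c$ for $a, b, c \in R^\ast_x$: if $a \odot b \le c$ then $a \le b \rightarrow c$ in $L$, hence $a = x \odot a \le x \odot (b \rightarrow c) = b \rightsquigarrow c$; conversely, if $a \le x \odot (b \rightarrow c)$ then $a \odot b \le x \odot (b \rightarrow c) \odot b \le x \odot c = c$, using $(b \rightarrow c) \odot b \le c$ and $c \in R^\ast_x$. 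Finally, prelinearity in $R^\ast_x$ (whose top is $x$) follows from the distributivity of $\odot$ over $\vee$ and the prelinearity of $L$:
\[(a \rightsquigarrow b) \vee (b \rightsquigarrow a) = x \odot [(a \rightarrow b) \vee (b \rightarrow a)] = x \odot 1 = x.\]
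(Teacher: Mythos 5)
Your proof is correct, and its overall architecture (direct verification of the bounded-lattice, monoid, residuation and prelinearity axioms) coincides with the paper's. The one place where you genuinely diverge is the closure of $R^\ast_x$ under $\wedge$, and there your argument is actually the more reliable one. The paper disposes of this step with the chain of equalities $x\odot(a\wedge b)=x\wedge(a\wedge b)=(x\wedge a)\wedge(x\wedge b)=(x\odot a)\wedge(x\odot b)=a\wedge b$, which tacitly uses $x\odot y=x\wedge y$ for idempotent $x$; that identity fails in general MTL-algebras (in the paper's own Example~3.2 the element $b$ is idempotent, yet $b\odot a=0\neq a=b\wedge a$). The first and last terms of that chain are nevertheless equal, because the distributivity law $x\odot(a\wedge b)=(x\odot a)\wedge(x\odot b)$ \emph{is} a valid MTL-identity -- and this is exactly what you prove, by checking it on chains and appealing to the subdirect representation of MTL-algebras as products of MTL-chains (the decomposition is already in Esteva--Godo; Jenei--Montagna is the standard completeness theorem, a minor attribution slip). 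Note also that the identity needs no idempotency of $x$, so you could state it for arbitrary $x$. Your residuation argument ($a\odot b\le c$ iff $a\le b\rightsquigarrow c$, using $c=x\odot c$ in the backward direction) is the same in substance as the paper's, which phrases it through the equivalence $x\odot b\le a\rightarrow c$ iff $x\odot b\le x\odot(a\rightarrow c)$; and your explicit check that $\rightsquigarrow$ lands in $R^\ast_x$ via $x\odot x=x$ fills in a point the paper leaves implicit. In short: same skeleton, but your handling of $\wedge$-closure repairs a genuinely shaky step in the published proof.
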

\begin{proof} First, we show that $(R^\ast_x,\wedge,\vee,0,x)$ is a bounded lattice with 0 as the smallest element
and $x$ as the greatest element. From Proposition 4.3(8), we have that $R^\ast_x$ is closed under $\vee$. On the other hand, we have $x\odot(a\wedge b)=x\wedge (a\wedge b)=(x\wedge a)\wedge(x\wedge b)=(x\odot a)\wedge(x\odot b)=a\wedge b$, that is, $a\wedge b\in R^\ast_x$.
Therefore $(R^\ast_x,\wedge,\vee)$ is a lattice. For all $a\in R^\ast_x$, one can easily check that $a\vee x=(x\odot a)\vee x=x$ and $a\wedge 0=0$. Thus, 0 is the smallest element and $x$ is the greatest element in $R^\ast_x$, respectively.

Next, we prove that $(R^\ast_x,\odot,x)$ is a commutative monoid with $x$ as neutral
element. From Proposition 4.3(8), we have $R^\ast_x$ is closed under $\odot$.  It follows that $(R^\ast_x,\odot)$ is a commutative semigroup. For all $a\in R^\ast_x$, we can obtain that
$a\odot x=a$, that is, $x$ is a unital element.

For all $a,b\in R^\ast_x$, we define $a\rightsquigarrow b=x\odot(a\rightarrow b)$. Now, we will show that $a\odot b\leq c$ if and only if $b\leq a\rightsquigarrow c$ for
all $a,b,c\in R^\ast_x$. One can easily check that $x\odot a\leq b$ if and only if $x\odot a\leq x\odot b$. Hence we have $a\odot b\leq c$ if and only if
$b\leq a\rightarrow c$ if and only if $x\odot b\leq a\rightarrow c$ if and only if $x\odot b\leq x\odot(a\rightarrow c)$ if and only if $x\odot b\leq a\rightsquigarrow b$ if and only if $b\leq a\rightsquigarrow c$ for
all $a,b,c\in R^\ast_x$.

For all $a,b\in R^\ast_x$, we have $(a\rightsquigarrow b)\vee(b\rightsquigarrow a)=(x\odot (a\rightarrow b))\vee (x\odot (b\rightarrow a))=x\odot((a\rightarrow b)\vee(b\rightarrow a))=x\odot 1=x$.

Therefore, we obtain that $(R^\ast_x, \odot,\wedge,\vee,\rightsquigarrow,0,x)$ is an MTL-algebra.
\end{proof}

From Theorems 4.7 and 4.8, we obtain that $(R^\ast_x, \odot,\wedge,\vee,\rightsquigarrow,0,x)$ and $(L^\ast_x,
 \odot,\wedge,\vee,\\ \rightarrow,x,1)$ form two MTL-algebras, respectively. Now, we have the {\bf third open problem}:

\begin{enumerate}[(3)]
  \item Are $(R^\ast_x, \odot,\wedge,\vee,\rightsquigarrow,0,x)$ and $(L^\ast_x,\odot,
  \wedge,\vee, \rightarrow,x,1)$ isomorphic?
\end{enumerate}

In what follows, we give some characterizations of G\"{o}del algebras and linearly ordered G\"{o}del algebras via multiplicative stabilizers.
 \begin{theorem}\emph{Let $L$ be an MTL-algebra. Then the following assertions are equivalent:}
 \begin{enumerate}[(1)]
   \item \emph{$L$ is a G\"{o}del algebra,
   \item for all $x\in L$, $L^\ast_x=\langle x\rangle=[x)$,
   \item for all $x\in L$, $R^\ast_x=(x]$.}
 \end{enumerate}
\end{theorem}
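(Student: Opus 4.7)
The plan is to establish the cycle $(1)\Rightarrow(2)\Rightarrow(1)$ together with $(1)\Rightarrow(3)\Rightarrow(1)$, which is more economical than six separate implications. The forward directions $(1)\Rightarrow(2)$ and $(1)\Rightarrow(3)$ are direct unwrappings of the definitions, while the converses share a common observation: the hypothesis forces every element of $L$ to be idempotent, from which the G\"odel identity follows automatically in any MTL-algebra.

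For $(1)\Rightarrow(2)$, assume $L$ is a G\"odel algebra. Then for any $x,a\in L$ one has $a\odot x=a\wedge x$, so $a\in L^\ast_x$ iff $a\wedge x=x$ iff $x\leq a$ iff $a\in[x)$, giving $L^\ast_x=[x)$. Since every element is idempotent, $x^n=x$ for all $n\geq 1$, and the description of $\langle x\rangle$ recalled in the preliminaries collapses to $\langle x\rangle=[x)$. The implication $(1)\Rightarrow(3)$ is symmetric: $x\odot a=x\wedge a=a$ iff $a\leq x$ iff $a\in(x]$.

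For the converses, the key observation is that $x$ itself lies in the claimed set: in $(2)$, trivially $x\in[x)=L^\ast_x$, and in $(3)$, trivially $x\in(x]=R^\ast_x$. Either way, membership of $x$ in its own left (resp.\ right) multiplicative stabilizer means $x\odot x=x$, so every element of $L$ is idempotent. From this I deduce the G\"odel identity $x\odot y=x\wedge y$ as follows. The inequality $x\odot y\leq x\wedge y$ holds in every MTL-algebra by Proposition~2.2(2). For the reverse, since $x\wedge y\leq x$ and $x\wedge y\leq y$, monotonicity of $\odot$ yields
\[
x\wedge y=(x\wedge y)\odot(x\wedge y)\leq x\odot y,
\]
using idempotency of $x\wedge y$ in the first equality. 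Hence $x\odot y=x\wedge y$ for all $x,y\in L$, so $L$ is a G\"odel algebra.

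I do not anticipate a serious obstacle; the only subtlety is remembering that $\langle x\rangle=[x)$ requires idempotency of $x$ (so this equality is a consequence, not a hypothesis, in condition $(2)$), and that we obtain idempotency for free by evaluating the stabilizer sets at the element $x$ itself.
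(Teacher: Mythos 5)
Your proposal is correct and follows essentially the same route as the paper: the same four implications, the same direct unwinding of $a\odot x=x$ (resp.\ $x\odot a=a$) for the forward directions, and the same converse argument via idempotency of every element (obtained from $x\in L^\ast_x$, resp.\ $x\in R^\ast_x$) followed by $x\wedge y=(x\wedge y)\odot(x\wedge y)\leq x\odot y$. The only difference is that the paper's Definition 2.3(3) of a G\"odel algebra also requires $x\wedge y=x\odot(x\rightarrow y)$, which the paper verifies explicitly and you leave implicit; this is harmless, since once $\odot=\wedge$ one gets $x\odot(x\rightarrow y)=x\wedge(x\rightarrow y)=x\wedge y$ immediately (using $x\odot(x\rightarrow y)\leq y$ and $y\leq x\rightarrow y$).
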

\begin{proof} $(1)\Rightarrow (2)$ Assume that $L$ is a G\"{o}del algebra, we have that $x\odot x=x$ for all $x\in L$. Thus, $x\in L^\ast_x$. Moreover, we have that $L^\ast_x$ is a filter of $L$ by combining Proposition 4.3(7). That is, for all $x\in L$, if $x\leq a$, we can obtain $a\in L^\ast_x$, which implies that $[x)\subseteq L^\ast_x$. Next, we will show that $L^\ast_x\subseteq [x)$. For all $a\in L^\ast_x$, we have $x\wedge a=x\odot a=x$, which implies that $x\leq a$, that is, $a\in [x)$. It follows that $L^\ast_x=[x)$. Therefore, we obtain $L^\ast_x=[x)$. Moreover, one can easily check that if $x$ is an idempotent element of $L$, then $\langle x\rangle=[x)$.

$(2)\Rightarrow (1)$ Suppose that $L^\ast_x=[x)$ for all $x\in L$. Since $x\in[x)=
L^\ast_x$, that is, $x\odot x=x$ for all $x\in L$. Next, we will prove
that $x\odot y = x\wedge y = x\odot(x\rightarrow y)$ for all $x,y\in L$. For all $x,y\in L$, we have $x\odot(x\rightarrow y)\leq(x\odot x)\rightarrow(x\odot y)$, that is, $x\odot(x\rightarrow y)\leq x\rightarrow(x\odot y)$.
Then we have $x\rightarrow y\leq x \rightarrow(x\rightarrow(x\odot y))=(x\odot x)\rightarrow(x\odot y)= x\rightarrow(x\odot y)$. It
follows that $x\odot(x\rightarrow y)\leq x\odot y$. On the other hand, from $y \leq x\rightarrow y$, we obtain $x\odot y=x\odot(x\rightarrow y)$. Thus, we get $x\odot y=x\odot(x\rightarrow y)$. Finally, we will show $x\odot y=x\wedge y$. For all $x,y\in L$, we have $x\odot y\leq x\wedge y$ by Proposition 2.2(2). Now, for all $u\in L$, if $u\leq x$ and $u\leq y$, we can obtain $u\odot u\leq x\odot y$. Hence we have $u\leq x\odot y$. It follows that $x\odot y=x\wedge y =x\odot(x\rightarrow y)$. Using Definition 2.3(3), we have that $L$ is a G\"{o}del algebra.

$(1)\Rightarrow(3)$ Assume that $L$ is a G\"{o}del algebra, we have that $x\odot x=x$ for all $x\in L$. It follows that $x\odot x=x$ for all $x\in L$. Thus, $x\in R^\ast_x$. Moreover, we have that $R^\ast_x$ is a lattice ideal of $L$ by combining Proposition 4.6. That is, for all $x\in L$, if $a\leq x$, we can obtain $a\in R^\ast_x$, which implies that $(x]\subseteq R^\ast_x$. Next, we will show that $R^\ast_x\subseteq (x]$. For all $a\in R^\ast_x$, we have $x\wedge a=x\odot a=a$, which implies that $a\leq x$, that is, $a\in (x]$. It follows that $R^\ast_x=(x]$. Therefore, we obtain $R^\ast_x=(x]$.

$(3)\Rightarrow(1)$ The proof is similar to $(2)\Rightarrow (1)$.
\end{proof}

\begin{theorem} \emph{Let $L$ be an MTL-algebra. Then the following assertions are equivalent:}
 \begin{enumerate}[(1)]
   \item \emph{$L$ is a linearly ordered G\"{o}del algebra,
   \item for all $x\in L$, $L^\ast_x=[x)$ and $L^\ast_x$ is a prime filter,
   \item for all $x\in L$, $R^\ast_x=(x]$ and $R^\ast_x$ is a lattice prime ideal.}
 \end{enumerate}
\end{theorem}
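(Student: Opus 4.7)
The plan is to reduce the equivalence to Theorem 4.9 plus a purely order-theoretic argument that linearity is captured by primality of the principal filter/ideal. Both descriptions $L_x^\ast = [x)$ and $R_x^\ast = (x]$ have already been shown to characterize G\"odel algebras among MTL-algebras, so what remains in each direction is only to translate \emph{linear ordering} into \emph{primality}.

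For $(1)\Rightarrow(2)$, assume $L$ is a linearly ordered G\"odel algebra. By Theorem 4.9 we already get $L_x^\ast=[x)$. To show $[x)$ is prime, take $a\vee b\in[x)$; since $L$ is a chain, $a\vee b$ equals either $a$ or $b$, so $x\leq a$ or $x\leq b$, and hence $a\in[x)$ or $b\in[x)$. The converse $(2)\Rightarrow(1)$ uses Theorem 4.9 to obtain the G\"odel structure, and then proves linearity by a clever choice of $x$: for arbitrary $a,b\in L$, set $x=a\vee b$, so that $a\vee b\in[a\vee b)=L_{a\vee b}^\ast$; since $L_{a\vee b}^\ast$ is assumed prime, either $a\in[a\vee b)$ or $b\in[a\vee b)$, i.e.\ $a\vee b\leq a$ or $a\vee b\leq b$, which forces $b\leq a$ or $a\leq b$.

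The equivalence $(1)\Leftrightarrow(3)$ is entirely dual. For $(1)\Rightarrow(3)$, Theorem 4.9 gives $R_x^\ast=(x]$, and if $a\wedge b\in(x]$ then linearity forces $a\wedge b\in\{a,b\}$, so $a\in(x]$ or $b\in(x]$. For $(3)\Rightarrow(1)$, Theorem 4.9 yields the G\"odel property and, for arbitrary $a,b\in L$, taking $x=a\wedge b$ we have $a\wedge b\in(a\wedge b]=R_{a\wedge b}^\ast$, so by primality $a\leq a\wedge b$ or $b\leq a\wedge b$, hence $a\leq b$ or $b\leq a$.

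I do not expect any real obstacle: the nontrivial content (the G\"odel characterization) is already in Theorem 4.9, and the only new ingredient is the standard fact that in a lattice, linearity is equivalent to every principal filter being prime (equivalently, every principal ideal being prime in the lattice-ideal sense). The one mild subtlety is making sure the correct direction of the inequality is used when invoking primality of $[a\vee b)$ or $(a\wedge b]$; the slick move is to evaluate at $x=a\vee b$ (resp.\ $x=a\wedge b$) so that the element $a\vee b$ (resp.\ $a\wedge b$) itself witnesses membership and primality immediately delivers comparability of $a$ and $b$.
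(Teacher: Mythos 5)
Your proposal is correct and follows essentially the same route as the paper: both reduce the G\"odel part to Theorem 4.9 and then convert linearity into primality of $[x)$ (resp.\ $(x]$), with the converse obtained by specializing to $x=a\vee b$ (resp.\ $x=a\wedge b$) and invoking primality. Your forward-direction argument (``in a chain $a\vee b\in\{a,b\}$'') is in fact a cleaner phrasing than the paper's computation with $\odot$, but it is the same idea.
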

\begin{proof}$(1)\Rightarrow(2)$ Assume that $L$ is a linearly ordered G\"{o}del algebra, using Theorem 4.9 $(1)\Rightarrow (2)$, we have $L^\ast_x=[x)$ is a filter. Moreover, if $x\vee y\in L^\ast_x$. Since $L$ is a linearly ordered G\"{o}del algebra, then $x\leq y$ or $y\leq x$. Assume $x\leq y$, then $x=x\odot(x\vee y)=x\odot y$. It follows that $y\in L^\ast_x$. This shows that $L^\ast_x$ is a prime filter.

$(2)\Rightarrow(1)$ First, from Theorem 4.9 $(2)\Rightarrow(1)$, we obtain that $L$ is a G\"{o}del algebra. Now, we will prove that $L$ is a linearly ordered G\"{o}del algebra. For $x,y\in L$, consider $L^\ast_{x\vee y}$, which is induced by $x\vee y$. Then $L^\ast_{x\vee y}$ is a prime filter by hypothesis. Note that $x\vee y\in L^\ast_{x\vee y}$. Hence $x\in L^\ast_{x\vee y}$ or $y\in L^\ast_{x\vee y}$. Assume that $x\in L^\ast_{x\vee y}$, then $x\vee y=x\odot(x\vee y)=x\wedge(x\vee y)=x$. So $y\leq x$. This means that $L$ is a linearly ordered G\"{o}del algebra.

$(1)\Rightarrow(3)$ Assume that $L$ is a linearly ordered G\"{o}del algebra, using Theorem 4.9 $(1)\Rightarrow (3)$, we have $R^\ast_x=(x]$ is a lattice ideal. Moreover, if $x\wedge y\in R^\ast_x$. Since $L$ is a  linearly ordered G\"{o}del algebra, then $x\leq y$ or $y\leq x$. Assume $x\leq y$, then $x=x\wedge y=x\odot(x\wedge y)=x\odot x$. It follows that $x\in R^\ast_x$. This shows that $R^\ast_x$ is a prime lattice ideal.

$(3)\Rightarrow(1)$ First, from Theorem 4.9 $(3)\Rightarrow(1)$, we obtain that $L$ is a G\"{o}del algebra. Now, we will prove that $L$ is a linearly ordered G\"{o}del algebra. For $x,y\in L$, consider the $R^\ast_{x\wedge y}$, which induced by $x\wedge y$. Then $R^\ast_{x\wedge y}$ is a lattice prime ideal by hypothesis. Note that $x\wedge y\in R^\ast_{x\wedge y}$. Hence $x\in R^\ast_{x\wedge y}$ or $y\in R^\ast_{x\wedge y}$. Assume that $x\in R^\ast_{x\wedge y}$, then $x=x\odot(x\wedge y)=x\wedge(x\wedge y)$. So $x\leq y$. This means that $L$ is a linearly ordered G\"{o}del algebra.
\end{proof}

    In the following, we discuss the relation between implicative stabilizers and  multiplicative stabilizers. In particular, we shows the sets $R_x (L_x)$ and $R^\ast_x$ are order isomorphic.

 \begin{theorem}\emph{Let $L$ be an MTL-algebra and $x$ be an idempotent element of $L$. Then sets $R_x$ and $R^\ast_x$ are order isomorphic.}
 \end{theorem}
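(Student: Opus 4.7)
The natural candidate is to use the adjoint pair $(\odot,\rightarrow)$ itself: define
\[
\varphi\colon R_x\longrightarrow R^\ast_x,\quad \varphi(a)=x\odot a,\qquad
\psi\colon R^\ast_x\longrightarrow R_x,\quad \psi(b)=x\rightarrow b.
\]
The plan is to check (i) well-definedness via the idempotency of $x$, (ii) that $\psi$ and $\varphi$ are mutually inverse, and (iii) that both are monotone; these three facts together give an order isomorphism.

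For well-definedness I would argue: if $a\in R_x$, then $x\odot(x\odot a)=(x\odot x)\odot a=x\odot a$ by idempotency of $x$, so $\varphi(a)\in R^\ast_x$; dually, if $b\in R^\ast_x$, then $x\rightarrow(x\rightarrow b)=(x\odot x)\rightarrow b=x\rightarrow b$ by the usual exponential law together with idempotency, so $\psi(b)\in R_x$. Here idempotency is the only nontrivial ingredient.

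For $\psi\circ\varphi=\mathrm{id}_{R_x}$ I would show $x\rightarrow(x\odot a)=a$ for $a\in R_x$ by two inequalities. The bound $a\leq x\rightarrow(x\odot a)$ is immediate from adjointness applied to $x\odot a\leq x\odot a$. For the reverse, since $x\leq 1$ one has $x\odot a\leq a$, so monotonicity of $\rightarrow$ in the second argument gives $x\rightarrow(x\odot a)\leq x\rightarrow a=a$, the last equality being the assumption $a\in R_x$. For $\varphi\circ\psi=\mathrm{id}_{R^\ast_x}$ I would dualize: for $b\in R^\ast_x$, adjointness on $x\rightarrow b\leq x\rightarrow b$ yields $x\odot(x\rightarrow b)\leq b$; on the other hand $b\leq x\rightarrow b$ by Proposition~2.2(9), hence $b=x\odot b\leq x\odot(x\rightarrow b)$, using $b\in R^\ast_x$.

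Monotonicity of both maps is automatic: $\odot$ is isotone in each argument, and $\rightarrow$ is isotone in its second argument. Thus $\varphi$ and $\psi$ are mutually inverse monotone bijections, which is precisely an order isomorphism. There is no real obstacle here; the only delicate point is recognizing that the two standard adjointness inequalities must be paired with, respectively, the trivial bound $x\odot a\leq a$ (used together with $a\in R_x$) and the bound $b\leq x\rightarrow b$ (used together with $b\in R^\ast_x$) in order to turn them into equalities.
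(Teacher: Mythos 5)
Your proposal is correct and uses essentially the same pair of maps as the paper, namely $a\mapsto x\odot a$ and $b\mapsto x\rightarrow b$; the only difference is organizational, in that you verify directly that the two maps are mutually inverse monotone bijections, whereas the paper checks injectivity and surjectivity of $g(a)=x\rightarrow a$ separately and then exhibits $g^{-1}(a)=x\odot a$. Your explicit check of well-definedness via idempotency is a welcome addition, since the paper passes over that point with ``clearly.''
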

 \begin{proof}For all $x\in L$, let $g:R^\ast_x\rightarrow R_x$ be defined by $g(a)=x\rightarrow a$ for all $a\in R^\ast_x$. Clearly, $g$ is a map from $R^\ast_x$ to $R_x$, that is, $g$ is well defined.
 \begin{enumerate}[(1)]
 \item For all $a,b\in R^\ast_x$, that is, $a=x\odot a$ and $b=x\odot b$, if $g(a)=g(b)$, then $x\rightarrow a=x\rightarrow b$. From $x\odot a\leq a$, we have $a\leq x\rightarrow a$, then $a\leq x\rightarrow b$. It follows that $x\odot a\leq b$, which means $a\leq b$. Similarly, we can prove $b\leq a$. Then $a=b$. Consequently, we obtain that $g$ is injective.
 \item In order to prove that $g$ is surjective, we shall prove the fact that $x\rightarrow (x\odot y)=y$ if and only if there exists $z\in L$ such that $x\rightarrow z=y$ for all $x,y\in L$. Indeed, the fact that $x\rightarrow (x\odot y)=y$ implies that there exists $z\in L$
such that $x\rightarrow z=y$ is obvious. Conversely, if $y= x\rightarrow z$, then $x\odot y\leq z$. It follows that $x\rightarrow (x\odot y)\leq x\rightarrow z=y$. Combining $y\leq x\rightarrow x\odot y$, we have $x\rightarrow (x\odot y)=y$. Now, for all $a\in R_x$, then $a = g(x) =x\rightarrow a$. Using the above result, we have that $g(x\odot a)= x\rightarrow(x\odot a)=x\rightarrow(x\odot(x\rightarrow a))= x \rightarrow a=a$. Thus, we conclude that $g$ is surjective.
 \item For all $a,b\in R^\ast_x$, if $a\leq b$, then $g(a)= x\rightarrow a\leq x\rightarrow b = g(b)$. Therefore, $g$ is order-preserving. Moreover, the inverse map
     $g^{-1}:R^\ast_x\rightarrow R_x$ is also order-preserving, where $g^{-1}(a) =x\odot a$ for all $a\in R_x$.

  Combining them, we obtain that $g$ is an order isomorphism from $R^\ast_x$ to $R_x$. Therefore, the ordered sets $R^\ast_x$ and $R_x$ are isomorphic.
 \end{enumerate}
 \end{proof}

\begin{theorem}\emph{Let $L$ be an MV-algebra and $x$ be an idempotent element of $L$. Then sets $L_x$ and $R^\ast_x$ are order isomorphic.}
 \end{theorem}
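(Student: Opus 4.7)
The plan is to combine two results already established in the paper: the coincidence of left and right implicative stabilizers in MV-algebras (Theorem 3.15) and the order isomorphism between $R_x$ and $R^\ast_x$ for idempotent $x$ (Theorem 4.11).

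First I would invoke Theorem 3.15, which tells us that in any MV-algebra $L$ and for any nonempty subset $X$ of $L$, one has $X_l = X_r = X_s = {}^\bot X$. Applying this to the singleton $X = \{x\}$ immediately gives $L_x = R_x$ as sets with the inherited order from $L$. So the identity map furnishes an order isomorphism $L_x \to R_x$.

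Next I would apply Theorem 4.11, which asserts that whenever $x$ is an idempotent element of an MTL-algebra (and every MV-algebra is an MTL-algebra), the map $g : R^\ast_x \to R_x$ defined by $g(a) = x \rightarrow a$, with inverse $g^{-1}(a) = x \odot a$, is an order isomorphism. Composing the identity isomorphism $L_x \to R_x$ with the inverse $g^{-1} : R_x \to R^\ast_x$ of Theorem 4.11 yields an order isomorphism $L_x \to R^\ast_x$ given explicitly by $a \mapsto x \odot a$, with inverse $a \mapsto x \rightarrow a$.

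There is essentially no obstacle here, since both ingredients have already been proved in the paper; the only thing worth verifying explicitly in the write-up is that the hypothesis of Theorem 3.15 (that $L$ is an MV-algebra) and the hypothesis of Theorem 4.11 (that $x$ is idempotent and $L$ is an MTL-algebra) are simultaneously satisfied under the assumptions of the present theorem, which is immediate. The proof can therefore be kept very short, essentially one line citing the two earlier results.
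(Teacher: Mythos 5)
Your proposal is correct and matches the paper's own proof, which likewise just cites Theorem 3.15 (to get $L_x=R_x$ in an MV-algebra) together with Theorem 4.11 (the order isomorphism $R^\ast_x\cong R_x$ for idempotent $x$). Your write-up simply makes the composition of the two isomorphisms explicit.
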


\begin{proof} It follows from Theorems 3.15 and 4.11.
\end{proof}
\section{Conclusions}
The aim of this paper is to develop the stabilizer theory of MTL-algebras. In the paper, some useful properties of particular stabilizers are discussed.  And, we characterize some special class of MTL-algebras, for example, IMTL-algebras, integral MTL-algebra, G\"{o}del algebras and MV-algebras, via these stabilizers. Finally, we discuss the relation between these stabilizers and obtain that the right implicative stabilizers are isomorphic to the right multiplicative stabilizers. There are still three open problems:
\begin{enumerate}[(1)]
           \item For any filter $F$ of an MTL-algebra $L$, whether there exists a nonempty subset $X$ such that $X_l=F$?
           \item Let $L$ be an MTL-algebra and $X_l=X_r$ for any nonempty subset $X$ of $L$. Is $L$ an MV-algebra?
           \item  Are $(R^\ast_x, \odot,\wedge,\vee,\rightsquigarrow,0,x)$ and $(L^\ast_x,\odot,
  \wedge,\vee, \rightarrow,x,1)$  isomorphic?
         \end{enumerate}
In our future work, we will consider these problems.

\medskip
\noindent\textbf{Acknowledgments}
\medskip

\indent This research is partially supported by a grant of
National Natural Science Foundation of China (11601302).

\section*{References}

\end{document}